\newtheorem{thrm}{Theorem}[section]
\newtheorem{lmm}[thrm]{Lemma}
\newtheorem{prpstn}[thrm]{Proposition}
\newtheorem{crllr}[thrm]{Corollary}
\newtheorem{assumption}[thrm]{Assumption}
\newtheorem{dfntn}[thrm]{Definition}
\DeclareMathOperator*{\argmin}{arg\,min}
\newcommand{\Rb}{\mathbbm{R}}
\newcommand{\1}{\mathbbm{1}}
\newcommand{\Dc}{\mathcal{D}}
\newcommand{\Fb}{\mathbb{F}}
\newcommand{\Eb}{\mathbb{E}}
\newcommand{\Lb}{\mathbb{L}}
\newcommand{\Sc}{\text{S}}
\newcommand{\Hc}{\text{H}}
\newcommand{\Cc}{\mathcal{C}}
\newcommand{\xLtwo}{\text{L}^2}
\newcommand{\xdif}{\text{\,d}}
\newcommand{\Rref}{\ref}
\newenvironment{tightitemize}{%
    \list{{\textup{$\bullet$}}}{\settowidth\labelwidth{{\textup{\qquad}}}
    \leftmargin\labelwidth \advance\leftmargin\labelsep
    \parsep 0pt plus 1pt minus 1pt \topsep 3pt \itemsep 3pt
    }}{\endlist}
\newenvironment{tightlist}[1]{%
    \list{{\textup{(\roman{enumi})}}}{\settowidth\labelwidth{{\textup{(#1)}}}
    \leftmargin 6pt \advance\leftmargin\labelsep \itemindent \parindent
    \parsep 0pt plus 1pt minus 1pt \topsep 0pt \itemsep 0pt
    \usecounter{enumi}}}{\endlist}
\title{Discrete-Time Approximation of Risk-Averse Control Problems for Diffusion Processes}
\author{
 Andrzej Ruszczy\'nski\thanks{
Rutgers University, Department of Management Science and Information Systems, Piscataway, NJ 08854, USA, Email: {rusz@rutgers.edu}}
\and
Jianing Yao\thanks{
Rutgers University, Department of Management Science and Information Systems, Piscataway, NJ 08854, USA, Email: {jy346@rutgers.edu}}
}
\begin{document}
\maketitle


\begin{abstract}
We consider optimal control problems for diffusion processes, where the objective functional is defined by a time-consistent dynamic risk measure. We focus
on coherent risk measures defined by $g$-evaluations. For such problems, we construct a family of time and space perturbed systems with piecewise-constant control
functions. We obtain a regularized optimal value function by
a special mollification procedure. This allows us to establish a bound on the difference between the optimal value functions of the original problem and of the problem with piecewise-constant controls.
\end{abstract}



\section{Introduction}
\label{s:intro}
The first introduction of a coherent (static) risk measure, by Artzner \emph{et al.} \cite{AP1,AP2}, was motivated by the capital adequacy rules of the Basel Accord. Large volume of research were devoted to this area,  F\"{o}llmer, Schied \cite{SF1}  and Frittelli and Rosazza Gianin\cite{FR1} generalized it to convex risk measure, Ruszczy\'{n}ski and Shapiro \cite{AR1} studied it from the perspective of optimization. Several classical references concerning static risk measures are \cite{SF2,FS,AR2,AR3}.

Further development of the theory of  risk measures lead  to a dynamic setting, in which the risk  is measured at each time instance based on the updated information. The key condition of \emph{time-consistency} allows for dynamic programming formulations. The discrete time case was extensively explored by Detlfsen and Scandolo \cite{DS1}, Bion-Nadal \cite{BN}, Cheridito et al. \cite{CD1,CK1}, F\"{o}llmer and Penner\cite{FP1}, Frittelli and Scandolo \cite{FR2},  Riedel \cite{RF}, and Ruszczy\'{n}ski and Shapiro\cite{AR2}. For the continuous-time case, Coquet,  Hu,  M{\'e}min and Peng \cite{coquet2002filtration} discovered that time-consistent dynamic risk measures can be represented as solutions of \emph{Backward Stochastic Differential Equations} (BSDE) (see also \cite{PSG1,gianin2006risk}). Inspired by that, Barrieu and El Karoui provided a comprehensive study in \cite{BE1,BE2};  further contributions being made by Delbean, Peng, and Rosazza Gianin \cite{delbaen2010representation}, and Quenez and Sulem \cite{quenez2013bsdes} (for a more general model with Levy processes). In addition, application to finance was considered, for example, in \cite{laeven2014robust}. Using the convergence results of Briand, Delyon and M\'emin \cite{BDM}, Stadje \cite{ST} finds the drivers of BSDE corresponding to discrete-time risk measures.

As for control with risk aversion, in discrete time setting, Ruszczy\'{n}ski \cite{AR4}, \c{C}avu\c{s} and Ruszczy\'nski \cite{cavus2014risk} and Fan and Ruszczy\'nski \cite{fan2015dynamic} developed the concept of a Markov risk measure and proposed risk-averse dynamic programming equations as well as computation methods. Our intention is to use continuous-time dynamic risk measures as objective functionals in optimal control problems for diffusion processes. While the traditional continuous stochastic control is well developed and discussed in numerous books (see, e.g., \cite{FleSon,Krylov,Pham,XYZ}), the risk-averse case
appears to be largely unexplored. In the present paper, we consider the risk-averse case with coherent risk measures given by $g$-evaluations. Such control problems are closely related to forward--backward systems of stochastic differential equations (FBSDE) (see, \cite{MA,peng1999fully}). For controlled fully coupled FBSDEs, Li and Wei \cite{JLQW} obtained the dynamic programming equation and derived the corresponding Hamilton--Jacobi--Bellman equation. Maximum principle for forward--backward systems and corresponding games was derived in \cite{oksendal2009maximum,oksendal2014forward}, including models with Levy processes.

The  contribution of this paper is the study of accuracy of discrete-time approximations of risk-averse optimal control problems with coherent risk measures given by $g$-evaluations. For the
purpose of the study, we construct a family of perturbed systems with two types
of perturbations: of the initial time and the initial state. For such a family,
we integrate the value functions of a piecewise-constant control with respect
to the said initial time and state values. This yields regularized functions
for which It\^{o} calculus can be applied. Using the earlier results on the
Hamilton--Jacobi--Bellman equation for risk-averse problems, we establish
an error bound of order $\Delta^{1/6}$, between the optimal values
of the original system and a system with piecewise constant controls with
time step $\Delta$.

Section \S \ref{s:foundations} has a synthetic character. We review in it the concept of $\Fb$-consistent evaluations and the connections to backward stochastic differential equations and dynamic risk measures. In \S \ref{s:cumul}, we formulate the
 risk-averse optimal control problem and study its basic properties. In the meanwhile, we recall the dynamic programming equation and the risk-averse analog of the Hamilton-Jacobi-Bellman equation. In section \S \ref{s:perturbed} we construct a family of time and space perturbed problems. They are used in a specially designed mollification procedure in \S \ref{s:mollification}, which yields sufficiently smooth close approximations of the optimal value function. In \S \ref{s:accuracy}, we prove that  the accuracy of the control policies restricted to piecewise-constant controls is of the order $h^{1/3}$, where $h^2$ is the time discretization step.

\section{Foundations}
\label{s:foundations}
\subsection{Nonlinear Expectations and Dynamic Risk Measures}
\label{s:ne}
We establish a suitable framework and briefly review the concept of $\Fb$-consistent nonlinear expectations (for an extensive treatment, see  \cite{PSG1}). For $0<T<\infty$, let $(\varOmega,\mathcal{F},\mathbb{P},\Fb)$ be a probability space, where $\Fb=\left\{\mathcal{F}_t\right\}_{0 \le t \le T}$ is a filtration. A vector-valued stochastic process $\{X_t\}_{0 \le t \le T}$ is said to be adapted to $\Fb$ if $X_t$ is an $\mathcal{F}_t$-measurable random variable for any $t\in [0,T]$.

We introduce the following notation.
\begin{tightitemize}
\item $\mathbb{E}_{t}[\,\cdot\,] : = \mathbb{E}[\,\cdot\,|\,\mathcal{F}_t]$;
\item $\text{P}^m[t,T]$: the set of $\Rb^m$-valued adapted processes on $[t, T]\times \varOmega$;
\item $\xLtwo(\varOmega, \mathcal{F}_t, \mathbb{P};\Rb^m)$: the set of $\Rb^m$-valued  $\mathcal{F}_t$-measurable random variables $\xi$ such that $\| \xi \|^2 := \mathbb{E}[\,|\xi|^2\,]<\infty$;
    for $m=1$, we write it $\xLtwo(\varOmega, \mathcal{F},\mathbb{P})$;
\item $\Sc^{2,m}[t, T]$: the set of elements $Y\in \text{P}^m[t,T]$ such that
$
\| Y \|^2_{\Sc^{2,m}[t, T]} :=  \mathbb{E} [\,\sup_{\,t\leq s\leq T}|Y_s|^2\,] < \infty$;
 for $m=1$, we write it $\Sc^{2}[t, T]$;
\item $\Hc^{2,m}[t, T]$: the set of elements $Y\in \text{P}^m[t,T]$, such that
$
\| Y \|^2_{\Hc^{2,m}[t, T]} := \mathbb{E}\Big[ \int_t^T |Y_s|^2\xdif s\Big] < \infty$;
 for $m=1$ we write it $\Hc^{2}[t, T]$;\footnote{When the norm is clear from the context, the subscripts are skipped.}
\item $\Cc^{1,2}([t,T]\times\Rb^m)$ the space of functions $f:[t,T]\times\Rb^m\to\Rb$, which are differentiable with respect to the first argument and twice differentiable with respect to the second argument, with all these derivatives continuous with respect to both arguments;
\item $\Cc^{1,2}_{\textup{b}}([t,T]\times\Rb^m)$ the space of functions $f\in \Cc^{1,2}([t,T]\times\Rb^m)$ with all derivatives bounded and continuous with respect to both arguments;
\item $\Cc^\infty(B)$: the space of functions $f:B\to\mathbb{R}$ that are infinitely continuously differentiable with respect to all arguments and have compact support on $B\subset \mathbb{R}^n$.
\end{tightitemize}

With this notation, we can introduce the concept of a nonlinear expectation.
\begin{dfntn} For\, $0 \le T< \infty$, a \emph{nonlinear expectation} is a functional
$
\rho_{0,T}: \xLtwo(\varOmega,\mathcal{F}_T, \mathbb{P}) \to \Rb
$
 satisfying the strict monotonicity property:
\begin{equation*}
\begin{split}
&\mbox{if } \xi_1\geq \xi_2 \mbox{ } \mbox{a.s.}, \mbox{ then } \rho_{0,T}[\,\xi_1\,] \geq \rho_{0,T}[\,\xi_2\,];\\
&\mbox{if } \xi_1\geq \xi_2 \mbox{ } \mbox{a.s.},  \text{ then } \rho_{0,T}[\,\xi_1\,] = \rho_{0,T}[\,\xi_2\,]\text{ if and only if } \xi_1 = \xi_2 \mbox{ a.s.};
\end{split}
\end{equation*}
and the constant preservation property:
\[
\rho_{0,T}[\,c\1_{\varOmega}\,] = c,\quad \forall\;c\in \Rb,
\]
where $\1_A$ is the characteristic function of the event $A\in\mathcal{F}_T$.
\end{dfntn}
Based on that, the $\Fb$-consistent nonlinear expectation is defined as follows.
\begin{dfntn}
\label{d:consistent}
For a filtered probability space $(\varOmega,\mathcal{F}, \mathbb{P}, \mathbb{F})$, a nonlinear expectation $\rho_{0,T}[\,\cdot\,]$ is  \emph{$\Fb$-consistent} if for
every $\xi\in \xLtwo(\varOmega, \mathcal{F}_T, \mathbb{P})$ and every $t\in [0,T]$  a random variable $\eta\in \xLtwo(\varOmega, \mathcal{F}_t, \mathbb{P})$ exists such that
\begin{equation*}
\rho_{0,T}[\,\xi\1_A\,]=\rho_{0,T}[\,\eta \1_A\,]\quad \forall A\in \mathcal{F}_t.
\end{equation*}
\end{dfntn}

The variable $\eta$ in Definition \ref{d:consistent} is uniquely defined, we denote it by $\rho_{t,T}[\,\xi\,]$. It can  be interpreted as a nonlinear conditional expectation of $\xi$ at time t. We can now define for every $t\in [0,T]$ the
corresponding nonlinear expectation $\rho_{0,t}: \xLtwo(\varOmega,\mathcal{F}_t, \mathbb{P}) \to \Rb$ as follows:
$ \rho_{0,t}[\,\xi\,] = \rho_{0,T}[\,\xi\,]$, for all $\xi\in \xLtwo(\varOmega, \mathcal{F}_t, \mathbb{P})$. In this way, a whole system
of $\Fb$-consistent nonlinear expectations $\big\{ \rho_{s,t}\big\}_{0\le s \le t \le T}$ is defined.
\begin{prpstn}
\label{p:nonev}
If $\rho_{0,T}[\,\cdot\,]$ is an $\Fb$-consistent nonlinear expectation, then for all $0\leq t\leq T$ and all $\xi, \xi' \in \xLtwo(\varOmega,\mathcal{F}_T, \mathbb{P})$, it has the following properties:
\begin{tightlist}{iii}
\item \textbf{Generalized constant preservation}: If $\xi\in \xLtwo(\varOmega,\mathcal{F}_t, \mathbb{P})$, then $\rho_{t,t}[\,\xi\,]=\xi$;
\item \textbf{Time consistency}: $\rho_{s,T}[\,\xi\,] = \rho_{s,t}[\,\rho_{t,T}[\,\xi\,]\,]$, for all  $0\le s\leq t$;
\item \textbf{Local property}:
 $\rho_{t,T}[\,\xi\1_A+\xi'\1_{A^c}\,]= \1_A \rho_{t,T}[\,\xi\,]+\1_{A^c} \rho_{t,T}[\,\xi'\,]$, for  all $A\in \mathcal{F}_t$.
\end{tightlist}
\end{prpstn}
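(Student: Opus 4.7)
My plan is to deduce all three properties from a single tool, the uniqueness of the variable $\eta$ guaranteed by Definition~\ref{d:consistent}. If $\eta_1,\eta_2\in\xLtwo(\varOmega,\mathcal{F}_t,\mathbb{P})$ both represent the same $\xi$, I would take the set $A=\{\eta_1>\eta_2\}\in\mathcal{F}_t$: then $\eta_1\1_A\ge \eta_2\1_A$ a.s.\ with strict inequality on $A$, while the two defining identities force $\rho_{0,T}[\eta_1\1_A]=\rho_{0,T}[\eta_2\1_A]$; the ``only if'' clause of strict monotonicity makes this impossible unless $P(A)=0$. Symmetric use of $\{\eta_1<\eta_2\}$ yields $\eta_1=\eta_2$ a.s. I will call this the uniqueness principle (U) and invoke it repeatedly.

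Property (i) is immediate: for $\xi\in \xLtwo(\varOmega,\mathcal{F}_t,\mathbb{P})$, the choice $\eta=\xi$ trivially satisfies $\rho_{0,t}[\xi\1_A]=\rho_{0,t}[\xi\1_A]$ for every $A\in \mathcal{F}_t$, so (U) applied within the system $\{\rho_{0,t}\}$ gives $\rho_{t,t}[\xi]=\xi$. For (ii), set $\eta=\rho_{t,T}[\xi]$ and $\zeta=\rho_{s,T}[\xi]$. For any $B\in \mathcal{F}_s\subseteq \mathcal{F}_t$, the defining identities chain together as
\[
\rho_{0,T}[\zeta\1_B] \;=\; \rho_{0,T}[\xi\1_B] \;=\; \rho_{0,T}[\eta\1_B].
\]
Since $\rho_{0,t}$ agrees with $\rho_{0,T}$ on $\xLtwo(\varOmega,\mathcal{F}_t,\mathbb{P})$, this is precisely the defining relation for $\rho_{s,t}[\eta]$; applying (U) produces $\zeta=\rho_{s,t}[\eta]=\rho_{s,t}[\rho_{t,T}[\xi]]$.

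For (iii), I introduce $\Xi=\xi\1_A+\xi'\1_{A^c}$, $\eta=\rho_{t,T}[\xi]$, $\eta'=\rho_{t,T}[\xi']$ and $\tilde\eta=\eta\1_A+\eta'\1_{A^c}$. For every $B\in \mathcal{F}_t$ with $B\subseteq A$, both $\Xi\1_B$ and $\tilde\eta\1_B$ collapse to $\xi\1_B$ and $\eta\1_B$, giving $\rho_{0,T}[\Xi\1_B]=\rho_{0,T}[\eta\1_B]=\rho_{0,T}[\tilde\eta\1_B]$; the analogous equality holds for $B\subseteq A^c$. A localized version of (U), applied with the test sets $\{\rho_{t,T}[\Xi]>\tilde\eta\}\cap A$ and $\{\rho_{t,T}[\Xi]>\tilde\eta\}\cap A^c$ (each contained in $A$ or $A^c$, hence covered by the verified identities), forces $\rho_{t,T}[\Xi]=\tilde\eta$ a.s., which is exactly the local property.

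The only delicate point in the whole argument is the bookkeeping inside (U): strict monotonicity is stated under an a.s.\ inequality, so one must argue that the chosen test set has positive probability and that the truncated variables genuinely differ on it, before concluding that the $\rho_{0,T}$ values differ strictly. Once this verification has been made carefully the first time, properties (i)--(iii) follow essentially tautologically from Definition~\ref{d:consistent} together with the tower inclusion $\mathcal{F}_s\subseteq \mathcal{F}_t$.
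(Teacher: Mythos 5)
Your argument is correct, but note that the paper itself offers no proof of Proposition \ref{p:nonev}: it belongs to the synthetic review of Section \ref{s:foundations} and is imported from Peng's lecture notes, so there is no in-paper argument to compare against. What you call the uniqueness principle (U) is exactly the standard device from that literature; it is also what justifies the paper's unproved remark that the $\eta$ of Definition \ref{d:consistent} is uniquely defined, and your derivation of it from the ``only if'' clause of strict monotonicity (testing on $\{\eta_1>\eta_2\}$ and $\{\eta_1<\eta_2\}$) is sound. Parts (i) and (ii) are then immediate as you say: in (ii) the key observation is that the defining identity for $\rho_{t,T}[\xi]$ holds in particular for all $B\in\mathcal{F}_s\subseteq\mathcal{F}_t$, so $\rho_{s,T}[\xi]$ verifies the defining relation of $\rho_{s,t}[\rho_{t,T}[\xi]]$ and (U) closes the argument. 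The only step that genuinely needs the care you flag is (iii): the defining relation for $\rho_{t,T}[\xi\1_A+\xi'\1_{A^c}]$ quantifies over all $B\in\mathcal{F}_t$, whereas you verify it only for $B$ contained in $A$ or in $A^c$. Your localized use of (U), with test sets $\{\rho_{t,T}[\varXi]>\tilde\eta\}\cap A$ and $\{\rho_{t,T}[\varXi]>\tilde\eta\}\cap A^c$ (and their mirror images), is the right fix, because these test sets lie inside the family of $B$'s for which the identity has been checked, and each application of strict monotonicity then forces the corresponding set to be null. The proof is complete as written.
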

It follows that $\mathbb{F}$-consistent nonlinear expectations are special cases of dynamic time-consistent measures of risk, enjoying
a number of useful properties. They do not, however, have the properties of convexity, translation invariance, or positive homogeneity,
unless additional assumptions are made. We shall return to this issue in the next subsection.

\subsection{Backward Stochastic Differential Equations and $g$-Evaluations}
\label{s:BSDE}
Close relation exists between $\Fb$-consistent nonlinear expectations on the space $\xLtwo(\varOmega,\mathcal{F}, \mathbb{P})$,
with the natural filtration of the Brownian motion,
 and backward stochastic differential equations (BSDE) \cite{PP1,PP2,PSG2}.  We equip $(\varOmega,\mathcal{F}, \mathbb{P})$ with a $d$-dimensional Brownian filtration, i.e., $\mathcal{F}_t = \sigma \big\{\{W_s; 0\leq s\leq T\}\cup \mathcal{N}\big\}$,
\noindent where $\mathcal{N}$ is the collection of $P$-null sets in $\varOmega$. In this paper we consider the following one-dimensional BSDE:
\begin{equation}
\label{eq:BSDE}
-\xdif Y_t=g(t, Y_t, Z_t)\xdif t - Z_t \xdif W_t, \quad Y_T = \xi,
\end{equation}
where  the data is the pair $(\xi, g)$, called the  \emph{terminal condition} and the \emph{generator} (or \emph{driver}), respectively.
Here, $\xi\in \xLtwo(\varOmega, \mathcal{F}_T, \mathbb{P})$, and $g:[0,T]\times \Rb\times \Rb^d \times\varOmega\rightarrow \Rb$ is a
measurable function (with respect to the product $\sigma$-algebra), which is \emph{nonanticipative}, that is,
$g(t, Y_t, Z_t)$ is $\mathcal{F}_t$-measurable for all $t\in [0,T]$.

The solution of the BSDE is a pair of processes $(Y, Z)\in \Sc^2[0, T]\times \Hc^{2,d}[0, T]$ such that
\begin{equation}
\label{bsint}
Y_t=\xi+\int^T_t\! g(s,Y_s,Z_s)\xdif s -\int^T_t\! Z_s\xdif W_s,\quad  t\in[0, T].
\end{equation}
The existence and uniqueness of the solution of (\ref{eq:BSDE}) can be guaranteed under the following assumption.
\begin{assumption}[Peng and Pardoux \cite{PP1}]
\label{a:pp1}
\begin{tightlist}{ii}
\item $g$ is jointly Lipschitz in $(y, z)$, i.e., a constant $K > 0$ exists such that for all $t\in [0,T]$, all $y_1,y_2\in \Rb$ and all $z_1,z_2\in \Rb^{d}$ we have
\begin{equation*}
|g(t, y_1, z_1)-g(t, y_2, z_2)| \leq K(|y_1-y_2|+|z_1-z_2|) \quad \textit{a.s.; }
\end{equation*}
\item the process $g(\cdot, 0, 0) \in \Hc^2[0, T]$.
\end{tightlist}
\end{assumption}

Under Assumption \ref{a:pp1}, we can define the $g$-evaluation.
\begin{dfntn}
\label{d:gev}
For each $0\leq t\leq T$ and $\xi\in \xLtwo(\varOmega, \mathcal{F}_T, \mathbb{P})$, the \emph{g-evaluation}  at time $t$ is the operator $\rho^g_{t,T}: \xLtwo(\varOmega, \mathcal{F}_T, \mathbb{P})\to  \xLtwo(\varOmega, \mathcal{F}_t, \mathbb{P}) $
defined as follows:
\begin{equation}
\label{g-evaluation}
\rho^g_{t,T}[\,\xi\,] = Y_t,
\end{equation}
where $(Y, Z)\in \Sc^{2,d}[t, T]\times \Hc^2[t, T]$ is the unique solution of \eqref{eq:BSDE}.
\end{dfntn}
The following theorem reveals the relationship between $g$-evaluation and $\Fb$-consistent nonlinear expectation.
\begin{thrm}
\label{t:g-evaluation}
Let the driver $g$ satisfy Assumption \Rref{a:pp1} and the condition:  $g(\cdot, 0, 0) \equiv 0$ a.s.. Then the system of $g$-evaluations $\big(\rho^g_{t,T}\big)_{0\le t \le T}$ defined in \eqref{g-evaluation} is a system of  $\Fb$-consistent nonlinear expectations.
Furthermore, we have
\begin{equation*}
\lim_{s\uparrow t}\rho^g_{s,t}[\,\xi\,] = \xi,\quad \forall\, \xi\in \xLtwo(\varOmega, \mathcal{F}_t,\mathbb{P}), \, t\in [0,T].
\end{equation*}
\end{thrm}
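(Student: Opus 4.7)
The plan is to verify in turn the three defining properties of an $\Fb$-consistent nonlinear expectation---strict monotonicity, constant preservation on $\xLtwo(\varOmega,\mathcal{F}_T,\mathbb{P})$, and the conditioning property of Definition~\ref{d:consistent}---and then separately establish the left-continuity at $s=t$. Strict monotonicity would follow from the strict comparison theorem for one-dimensional BSDEs under Assumption~\ref{a:pp1}: if $\xi_1\ge\xi_2$ a.s., then $Y^1_s\ge Y^2_s$ a.s.\ for every $s$, and equality $Y^1_0=Y^2_0$ forces $\xi_1=\xi_2$ a.s. For constant preservation $\rho^g_{0,T}[\,c\1_\varOmega\,]=c$, the pair $(Y,Z)\equiv(c,0)$ solves \eqref{eq:BSDE} with terminal $c$ because $g(t,c,0)=0$---which in the coherent (positively homogeneous) setting underlying $g$-evaluations is a consequence of $g(\cdot,0,0)\equiv 0$---and uniqueness then yields the identity.

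For $\Fb$-consistency I would fix $\xi\in\xLtwo(\varOmega,\mathcal{F}_T,\mathbb{P})$ and $t\in[0,T]$, let $(Y,Z)$ solve \eqref{eq:BSDE} on $[0,T]$ with terminal $\xi$, and take $\eta:=Y_t=\rho^g_{t,T}[\xi]\in\xLtwo(\varOmega,\mathcal{F}_t,\mathbb{P})$. Two BSDE facts enter. First, the flow property: restricted to $[0,t]$, $(Y,Z)$ solves \eqref{eq:BSDE} with terminal $\eta$, so that $\rho^g_{0,T}[\xi]=\rho^g_{0,t}[\eta]$. Second, the local property: because $g(\cdot,0,0)\equiv 0$ one has $\1_A g(u,Y_u,Z_u)=g(u,\1_A Y_u,\1_A Z_u)$ a.s.\ for $u\ge t$ and $A\in\mathcal{F}_t$, whence $(\1_A Y,\1_A Z)$ is the unique solution of \eqref{eq:BSDE} on $[t,T]$ with terminal $\xi\1_A$; evaluation at $t$ yields $\rho^g_{t,T}[\xi\1_A]=\1_A\eta$. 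Combining these, $\rho^g_{0,T}[\xi\1_A]=\rho^g_{0,t}[\1_A\eta]$, and the same reasoning applied to the $\mathcal{F}_t$-measurable terminal $\eta\1_A$ (for which $(\eta\1_A,0)$ solves \eqref{eq:BSDE} on $[t,T]$ under the coherent structure) gives $\rho^g_{0,T}[\eta\1_A]=\rho^g_{0,t}[\eta\1_A]$, so the two sides agree and Definition~\ref{d:consistent} is verified with $\eta=\rho^g_{t,T}[\xi]$.

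For the left-continuity claim, for $\xi\in\xLtwo(\varOmega,\mathcal{F}_t,\mathbb{P})$ let $(Y,Z)\in\Sc^2[0,t]\times\Hc^{2,d}[0,t]$ solve \eqref{eq:BSDE} on $[0,t]$ with terminal $\xi$, so that $\rho^g_{s,t}[\xi]-\xi=Y_s-Y_t=\int_s^t g(r,Y_r,Z_r)\xdif r-\int_s^t Z_r\xdif W_r$. Cauchy--Schwarz on the Lebesgue integral and It\^o's isometry on the stochastic integral give
\[
\Eb\!\left[\,\bigl|\rho^g_{s,t}[\xi]-\xi\bigr|^2\,\right]\le 2(t-s)\,\Eb\!\left[\int_s^t |g(r,Y_r,Z_r)|^2\xdif r\right]+2\,\Eb\!\left[\int_s^t |Z_r|^2\xdif r\right],
\]
which vanishes as $s\uparrow t$ by dominated convergence, using the Lipschitz bound on $g$ and the fact that $(Y,Z)\in\Sc^2[0,t]\times\Hc^{2,d}[0,t]$.

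The main conceptual subtlety I anticipate is the constant-preservation step: the hypothesis $g(\cdot,0,0)\equiv 0$ by itself does not force $g(\cdot,y,0)\equiv 0$ for every $y\in\Rb$, which is what is actually needed both to conclude $\rho^g_{0,T}[c\1_\varOmega]=c$ for arbitrary constants and to preserve general $\mathcal{F}_t$-measurable terminals under $\rho^g_{t,T}$ inside the $\Fb$-consistency argument. This gap is legitimized by the coherent structure of the $g$-evaluation (positive homogeneity of $g$ in $(y,z)$, or equivalently absence of $y$-dependence in the typical coherent setting). All remaining ingredients---BSDE comparison, uniqueness, the flow and local properties, and the standard $L^2$ a priori estimates---are classical.
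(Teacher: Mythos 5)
The paper does not actually prove Theorem \ref{t:g-evaluation}: it is quoted as a known result from Peng's theory of $g$-expectations (cf.\ \cite{PSG1,coquet2002filtration}), so there is no in-paper argument to compare yours against. On its own merits, your proof follows the standard route and its steps are sound: strict monotonicity from the strict comparison theorem for one-dimensional Lipschitz BSDEs; $\Fb$-consistency with $\eta=Y_t$ via the flow (semigroup) property on $[0,t]$ together with the identity $\1_A\, g(u,Y_u,Z_u)=g(u,\1_A Y_u,\1_A Z_u)$, which holds precisely because $g(\cdot,0,0)\equiv 0$ and makes $(\1_A Y,\1_A Z)$ the unique solution with terminal $\xi\1_A$; and the $L^2$ estimate via Cauchy--Schwarz and the It\^{o} isometry for $\lim_{s\uparrow t}\rho^g_{s,t}[\xi]=\xi$. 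The subtlety you flag is genuine and is in fact a weakness of the theorem's stated hypotheses rather than of your argument: $g(\cdot,0,0)\equiv 0$ does not imply $g(\cdot,c,0)\equiv 0$, and constant preservation can genuinely fail without it --- for instance $g(t,y,z)=y$ satisfies Assumption \ref{a:pp1} and $g(\cdot,0,0)\equiv 0$, yet the linear BSDE gives $\rho^g_{0,T}[\,c\1_{\varOmega}\,]=c\,e^{T}\neq c$. The same issue resurfaces where you need $\rho^g_{t,T}[\eta\1_A]=\eta\1_A$ in the consistency step. Peng's original theorem assumes $g(t,y,0)=0$ for all $y$; in this paper the gap is closed only once Assumption \ref{a:riskmeasure}(i) (no $y$-dependence and $g(\cdot,0)\equiv 0$) is in force, which is the regime in which the theorem is actually used downstream. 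So your proof is correct under the hypothesis you rightly add, and what you have identified is an imprecision in the statement, not a gap of your own.
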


Surprisingly, Coquet,  Hu,  M{\'e}min, and Peng proved in \cite{coquet2002filtration} that every $\Fb$-consistent nonlinear expectation
which is ``dominated''  by $\rho^{\mu, \nu}_{0,T}$ (a $g$-evaluation with the driver $\mu |y| + \nu| z|$ with some $\nu, \mu >0$) is in fact
a  $g$-evaluation with some~$g$. The domination is understood as follows:
$\rho_{0,T}[Y+\eta] - \rho_{0,T}[Y] \le \rho^{\mu,\nu}_{0,T}[\eta]$, for all $Y$, $\eta\in \xLtwo(\varOmega, \mathcal{F}_T,\mathbb{P})$.\\

From now on we shall use only $g$-evaluations as time-consistent dynamic measures of risk. To ensure desirable properties
of the resulting measures of risk, we shall impose additional conditions on the driver $g$.

\begin{assumption}
\label{a:riskmeasure}
The driver $g$ satisfies for almost all $t\in [0,T]$ the following conditions:
\begin{tightlist}{iii}
\item $g$ is deterministic and independent of $y$, that is, $g:[0,T]\times \Rb^d\to\Rb$, and $g(\cdot, 0) \equiv 0$;
\item $g(t, \cdot)$ is convex for all $t\in [0,T]$;
\item $g(t,\cdot)$ is positively homogeneous for all $t\in [0,T]$.
\end{tightlist}
\end{assumption}

Under these conditions, one can derive new properties of the evaluations $\rho_{t,T}^g$, $t\in [0,T]$, in addition to the
general properties of $\Fb$-consistent nonlinear expectations stated in Proposition \ref{p:nonev}.
\begin{thrm}
\label{t:geval-prop}
Suppose $g$ satisfies Assumption \Rref{a:pp1} and condition (i) of Assumption \Rref{a:riskmeasure}.
Then the system of $g$-evaluations $\rho_{t,r}^g$, $0\le t \le r \le T$ has the following properties:
\begin{tightlist}{iii}
\item \textbf{Normalization}: $\rho_{t,r}^g(0)=0$;
\item \textbf{Translation Property}: for all $\xi\in \xLtwo(\varOmega, \mathcal{F}_r, \mathbb{P})$ and $\eta\in \xLtwo(\varOmega, \mathcal{F}_t, \mathbb{P})$,
\[
    \rho_{t,r}^g(\xi+\eta)=\rho_{t,r}^g(\xi)+\eta, \quad \text{a.s.};
\]
    \end{tightlist}
If, additionally, condition (ii) of of Assumption \Rref{a:riskmeasure} is satisfied, then
$\rho_{t,r}^g$ has the following property:
\begin{tightlist}{iii}
\setcounter{enumi}{2}
\item \textbf{Convexity}: for all $\xi,\xi'\in \xLtwo(\varOmega, \mathcal{F}_r, \mathbb{P})$
and  all $\lambda \in L^\infty(\varOmega, \mathcal{F}_t, \mathbb{P})$ such that \mbox{$0 \leq \lambda \leq 1$},
\begin{equation*}
\rho_{t,r}^g(\lambda \xi + (1-\lambda)\xi')\leq \lambda \rho_{t,r}^g(\xi)+(1-\lambda)\rho_{t,r}^g(\xi'),\quad \text{a.s.}.
\end{equation*}
\end{tightlist}
Moreover, if $g$ also satisfies condition (iii) of Assumption \Rref{a:riskmeasure}, then $\rho_{t,r}^g$ has also the following property:
\begin{tightlist}{iv}
\setcounter{enumi}{3}
\item \textbf{Positive Homogeneity}: for all $\xi\in \xLtwo(\varOmega, \mathcal{F}_r, \mathbb{P})$
and all $\beta \in L^\infty(\varOmega, \mathcal{F}_t, \mathbb{P})$ such that $\beta \ge 0$,
we have
\[
\rho_{t,r}^g(\beta \xi) = \beta \rho_{t,r}^g(\xi), \quad \text{a.s.}.
\]
\end{tightlist}
\end{thrm}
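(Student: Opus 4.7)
My plan is to prove each of the four properties in turn by exploiting the specific structure of the BSDE defining $\rho^g$, together with the uniqueness of BSDE solutions and the standard comparison theorem for BSDEs.

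For \textbf{normalization (i)}, I would take $\xi=0$ and simply observe that the pair $(Y,Z)\equiv(0,0)$ solves \eqref{eq:BSDE} on $[t,r]$ because $g$ is independent of $y$ with $g(\cdot,0)\equiv 0$. Uniqueness under Assumption \ref{a:pp1} then forces $\rho^g_{t,r}(0)=Y_t=0$.

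For the \textbf{translation property (ii)}, let $(Y,Z)$ denote the solution with terminal $\xi$. I would set $\widetilde{Y}_s:=Y_s+\eta$ for $s\in[t,r]$ and note that, since $\eta$ is $\mathcal{F}_t$-measurable (hence a constant on $[t,r]$ up to the filtration) and $g$ does not depend on $y$, the pair $(\widetilde{Y},Z)$ satisfies
\[
\widetilde{Y}_s=(\xi+\eta)+\int_s^r g(u,Z_u)\xdif u-\int_s^r Z_u\xdif W_u.
\]
By uniqueness applied to terminal $\xi+\eta$, we conclude $\rho^g_{t,r}(\xi+\eta)=\widetilde{Y}_t=\rho^g_{t,r}(\xi)+\eta$.

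For \textbf{convexity (iii)}, I would let $(Y^i,Z^i)$ solve \eqref{eq:BSDE} with terminal $\xi^i$, $i=1,2$, and form the convex combination $\overline{Y}_s:=\lambda Y^1_s+(1-\lambda)Y^2_s$, $\overline{Z}_s:=\lambda Z^1_s+(1-\lambda)Z^2_s$, where $\lambda\in L^\infty(\varOmega,\mathcal{F}_t,\mathbb{P})$ is $\mathcal{F}_t$-measurable (hence adapted on $[t,r]$ and behaves like a pointwise constant inside the driver). Linear combination of the two BSDEs shows that $(\overline{Y},\overline{Z})$ satisfies a BSDE on $[t,r]$ with terminal $\lambda\xi^1+(1-\lambda)\xi^2$ and driver $\tilde g_s:=\lambda g(s,Z^1_s)+(1-\lambda)g(s,Z^2_s)$. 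By convexity of $g(s,\cdot)$, pointwise in $\omega$,
\[
\tilde g_s\ \ge\ g\bigl(s,\lambda Z^1_s+(1-\lambda)Z^2_s\bigr)=g(s,\overline{Z}_s)\quad\text{a.s.}
\]
The comparison theorem for BSDEs (valid under Assumption \ref{a:pp1}) applied to $(\overline{Y},\overline{Z})$ versus the solution $(Y,Z)$ with driver $g$ and the same terminal $\lambda\xi^1+(1-\lambda)\xi^2$ yields $\overline{Y}_t\ge Y_t$, which is precisely the desired inequality.

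For \textbf{positive homogeneity (iv)}, given $\beta\in L^\infty(\varOmega,\mathcal{F}_t,\mathbb{P})$ with $\beta\ge 0$ and $(Y,Z)$ solving \eqref{eq:BSDE} with terminal $\xi$, I would multiply the equation by $\beta$ and use positive homogeneity of $g(s,\cdot)$ with the $\mathcal{F}_t$-measurable nonnegative scalar $\beta$ to obtain
\[
\beta Y_s=\beta\xi+\int_s^r g(u,\beta Z_u)\xdif u-\int_s^r \beta Z_u\xdif W_u.
\]
Thus $(\beta Y,\beta Z)$ solves \eqref{eq:BSDE} with terminal $\beta\xi$, and uniqueness gives $\rho^g_{t,r}(\beta\xi)=\beta Y_t=\beta\rho^g_{t,r}(\xi)$.

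The only subtle point, and the step I expect to require the most care, is the convexity argument: one has to justify that multiplying by an $\mathcal{F}_t$-measurable random variable $\lambda$ preserves the BSDE structure on $[t,r]$ (adaptedness and integrability of $\lambda Z^i$, legitimacy of pulling $\lambda$ inside $g$ as if it were a constant), and then invoke a comparison theorem whose hypotheses accommodate a driver of the form $\tilde g_s$ which is a stochastic process rather than a measurable function of $(s,z)$. Both are standard, but the bookkeeping is where a careful write-up is needed; the other three properties are essentially direct applications of uniqueness.
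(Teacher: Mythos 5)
Your proposal is correct and follows the standard argument from the $g$-expectation literature; the paper itself states this theorem without proof, as a specialization of known results (Peng, Barrieu--El~Karoui, Rosazza Gianin), so there is no in-paper proof to diverge from. Parts (i), (ii), and (iv) are indeed immediate from uniqueness once one checks that $\eta$ (resp.\ $\beta$) being $\mathcal{F}_t$-measurable lets it pass through the stochastic integral over $[s,r]$ with $s\ge t$, and your convexity step is the correct supersolution form of the comparison theorem: $(\overline{Y},\overline{Z})$ solves the BSDE with the Lipschitz driver $g$ plus the nonnegative increasing process $\int_t^{\cdot}\bigl[\tilde g_u-g(u,\overline{Z}_u)\bigr]\xdif u$, which is exactly the setting in which comparison applies, so the subtlety you flag is real but resolves as you expect.
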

It follows that under Assumptions \ref{a:pp1} and  \ref{a:riskmeasure}, the $g$-evaluations $\rho_{t,r}^g$ are convex or coherent conditional measures of risk (depending on whether (iii) is assumed or not).

Finally, we can derive their dual representation, by specializing the general results of \cite{BE2}.
\begin{thrm}
\label{t:geval-dual}
Suppose $g$ satisfies Assumption \Rref{a:pp1} and  \Rref{a:riskmeasure}.
Then for all $0\le t \le r \le T$ and all $\xi\in \xLtwo(\varOmega, \mathcal{F}_r, \mathbb{P})$
we have
\begin{equation}
\label{geval-dual}
\rho_{t,r}^g(\xi) = \sup_{\varGamma \in \mathcal{A}_{t,r}} \Eb\big[\,\varGamma \xi ~\big|~\mathcal{F}_t\,\big ]
\end{equation}
where $\mathcal{A}_{t,r}=\partial \rho_{t,r}^g(0)$ is defined as follows:
\begin{equation}
\label{A-set}
\mathcal{A}_{t,r} = \left\{\,\exp\left(\int_t^r \gamma_s \xdif W_s - \frac{1}{2}\int_t^r\lvert\gamma_s\rvert^2\xdif s\right): \gamma \in \Hc^{2}[t, r], \ \gamma_s \in \partial g(s,0), \ s\in [t,r]\,\right\}.
\end{equation}
\end{thrm}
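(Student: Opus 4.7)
The plan is to derive the dual representation by exploiting the variational representation of the positively homogeneous convex driver and applying Girsanov's theorem to rewrite the BSDE under a family of equivalent measures. Fix $0 \le t \le r \le T$ and $\xi \in \xLtwo(\varOmega, \mathcal{F}_r, \mathbb{P})$, and let $(Y, Z) \in \Sc^2[t, r] \times \Hc^{2,d}[t,r]$ denote the solution of \eqref{eq:BSDE} on $[t,r]$ with terminal value $\xi$. The starting point is the observation that under Assumption \ref{a:riskmeasure}, $g(s, \cdot)$ is, for almost every $s$, a convex positively homogeneous function with $g(s, 0) = 0$, hence coincides with the support function of its subdifferential at the origin:
\begin{equation*}
g(s, z) = \sup_{\gamma \in \partial g(s, 0)} \gamma \cdot z, \qquad \forall\, z \in \Rb^d,
\end{equation*}
with the supremum attained by any element of $\partial g(s, z) \subseteq \partial g(s, 0)$.

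The next step is the upper bound. For every admissible $\gamma \in \Hc^2[t, r]$ with $\gamma_s \in \partial g(s, 0)$, the Lipschitz property (Assumption \ref{a:pp1}) forces $\lvert \gamma_s \rvert \le K$, so Novikov's condition holds trivially, the stochastic exponential $\varGamma^\gamma_{t,r} = \exp\bigl(\int_t^r \gamma_s \xdif W_s - \tfrac{1}{2}\int_t^r \lvert \gamma_s \rvert^2 \xdif s\bigr)$ is a true martingale density, and under the equivalent measure $\mathbb{Q}^\gamma$ the process $\widetilde W_s = W_s - \int_t^s \gamma_u \xdif u$ is a Brownian motion on $[t,r]$. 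Substituting into \eqref{bsint} and rearranging,
\begin{equation*}
Y_t = \xi + \int_t^r \bigl[ g(s, Z_s) - \gamma_s \cdot Z_s \bigr] \xdif s - \int_t^r Z_s \xdif \widetilde W_s.
\end{equation*}
Taking $\Eb^{\mathbb{Q}^\gamma}[\,\cdot\, | \mathcal{F}_t]$, using that $\int_t^{\cdot} Z_s \xdif \widetilde W_s$ is a $\mathbb{Q}^\gamma$-martingale (thanks to the square-integrability of $Z$ and the boundedness of $\gamma$), and noting that $g(s, Z_s) - \gamma_s \cdot Z_s \ge 0$ by the subgradient inequality at $z = 0$, yields
\begin{equation*}
Y_t \;\ge\; \Eb^{\mathbb{Q}^\gamma}[\xi \mid \mathcal{F}_t] \;=\; \Eb\bigl[\, \varGamma^\gamma_{t,r}\, \xi \,\bigm|\, \mathcal{F}_t \,\bigr].
\end{equation*}

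For the matching lower bound, I would construct a selector $\gamma^* \in \Hc^2[t,r]$ with $\gamma^*_s \in \partial g(s, Z_s)$ via a measurable-selection theorem (e.g.\ Kuratowski--Ryll-Nardzewski), applied to the closed-valued, measurable multifunction $(s, \omega) \mapsto \partial g(s, Z_s(\omega))$. Positive homogeneity gives $\partial g(s, Z_s) \subseteq \partial g(s, 0)$ together with the Euler identity $g(s, Z_s) = \gamma^*_s \cdot Z_s$; Lipschitzness again yields $\lvert \gamma^*_s \rvert \le K$, so $\gamma^*$ is bounded (hence in $\Hc^2[t,r]$) and $\varGamma^{\gamma^*}_{t,r} \in \mathcal{A}_{t,r}$. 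Repeating the computation with $\gamma = \gamma^*$ the integrand in the drift vanishes and the inequality becomes an equality, so $Y_t = \Eb[\, \varGamma^{\gamma^*}_{t,r}\, \xi \mid \mathcal{F}_t ]$. Combining the two steps gives \eqref{geval-dual} with the supremum attained.

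The step I expect to be the main obstacle is the measurable selection yielding $\gamma^*$ together with the verification that $g(s, Z_s) = \gamma^*_s \cdot Z_s$ holds $\xdif s \otimes \xdif \mathbb{P}$-a.e.; one must be careful that the subdifferential $\partial g(s, \cdot)$ depends measurably on $s$ and produces an $\Fb$-progressively measurable selector. Everything else is either routine (Girsanov under a bounded drift, convex-analytic identities for positively homogeneous functions) or follows from the assumptions already in place. The identification $\mathcal{A}_{t,r} = \partial \rho_{t,r}^g(0)$ is then a consequence of the representation, since the right-hand side of \eqref{geval-dual} is the support function of $\mathcal{A}_{t,r}$ evaluated conditionally.
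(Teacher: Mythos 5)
Your proof is correct, and the first thing to say is that there is no in-paper argument to compare it against: the paper obtains Theorem \ref{t:geval-dual} ``by specializing the general results of \cite{BE2}'' and gives no proof. What you have written is exactly that specialization, carried out for a deterministic, $y$-independent, convex, positively homogeneous Lipschitz driver, and the steps check out. The convex-analytic inputs are right: $g(s,\cdot)$ finite, convex, positively homogeneous with $g(s,0)=0$ is the support function of $\partial g(s,0)$, every subgradient is bounded by the Lipschitz constant $K$, and for $\gamma\in\partial g(s,z)$ one has both $\gamma\in\partial g(s,0)$ and the Euler identity $\gamma\cdot z=g(s,z)$. Boundedness of $\gamma$ makes Novikov automatic, so each $\varGamma^{\gamma}_{t,r}$ is a true density with $\Eb[\varGamma^{\gamma}_{t,r}\mid\mathcal{F}_t]=1$, which is what justifies the Bayes step $\Eb^{\mathbb{Q}^{\gamma}}[\xi\mid\mathcal{F}_t]=\Eb[\varGamma^{\gamma}_{t,r}\xi\mid\mathcal{F}_t]$. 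Two points deserve one more sentence each than you give them. First, the martingale property of $\int Z\,\xdif\widetilde{W}$ under $\mathbb{Q}^{\gamma}$ does not follow directly from $Z\in\Hc^{2,d}[t,r]$ under $\mathbb{P}$; it follows from Burkholder--Davis--Gundy once you verify $\Eb^{\mathbb{Q}^{\gamma}}\big[(\int_t^r|Z_s|^2\xdif s)^{1/2}\big]<\infty$, which holds by Cauchy--Schwarz because $\varGamma^{\gamma}_{t,r}$ has finite second moment. Second, the measurable-selection step you flag as the main obstacle is in fact unproblematic here precisely because $g$ is deterministic (Assumption \Rref{a:riskmeasure}(i)): $(s,z)\mapsto\partial g(s,z)$ is a measurable, nonempty, closed-convex-valued multifunction (a normal convex integrand), so composing a Castaing/Kuratowski--Ryll-Nardzewski selector with the progressively measurable $Z$ produces an adapted, bounded $\gamma^{*}$, which is therefore an admissible index in \eqref{A-set}. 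With those two remarks your argument is a complete, self-contained proof, and it yields slightly more than the statement, namely that the supremum in \eqref{geval-dual} is attained at $\varGamma^{\gamma^{*}}_{t,r}$.
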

\begin{crllr}
\label{c:Gamma-bound}
A constant $C$ exists, such that for all $0\le t \le r \le T$ and all $\varGamma_{t,r} \in \mathcal{A}_{t,r}$ we have
\[
\|\varGamma_{t,r} -1\| \le \frac{r-t}{T} e^{CT}.
\]
\end{crllr}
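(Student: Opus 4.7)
The plan is to view every element $\varGamma_{t,r} \in \mathcal{A}_{t,r}$ as the terminal value at time $r$ of a Dol\'eans--Dade exponential started from $1$ at time $t$, convert this exponential into its associated linear SDE, and then bound its $L^2$-distance to $1$ via It\^{o}'s isometry combined with a uniform second-moment estimate on the exponential itself.

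The preliminary observation is that admissible integrands are uniformly bounded. By Assumption \ref{a:pp1}(i), $g(s,\cdot)$ is $K$-Lipschitz, and by Assumption \ref{a:riskmeasure}(ii) it is convex, so the subdifferential $\partial g(s,0)$ is contained in the closed ball of radius $K$ in $\mathbb{R}^d$. Hence every admissible $\gamma$ in the definition of $\mathcal{A}_{t,r}$ satisfies $|\gamma_s|\le K$ almost surely for all $s\in [t,r]$.

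Define $\varGamma_{t,s}:=\exp\!\bigl(\int_t^s\gamma_u\xdif W_u-\tfrac12\int_t^s|\gamma_u|^2\xdif u\bigr)$ for $s\in [t,r]$. It\^{o}'s formula yields the linear SDE $\xdif \varGamma_{t,s}=\varGamma_{t,s}\gamma_s\xdif W_s$ with $\varGamma_{t,t}=1$, so $\varGamma_{t,r}-1=\int_t^r\varGamma_{t,s}\gamma_s\xdif W_s$. It\^{o}'s isometry together with the bound on $\gamma$ gives
\[
\|\varGamma_{t,r}-1\|^{2}=\mathbb{E}\int_t^r \varGamma_{t,s}^{\,2}\,|\gamma_s|^{2}\xdif s \;\le\; K^{2}\!\int_t^r \mathbb{E}\bigl[\varGamma_{t,s}^{\,2}\bigr]\xdif s.
\]
A uniform bound on $\mathbb{E}[\varGamma_{t,s}^{\,2}]$ follows from the factorisation
\[
\varGamma_{t,s}^{\,2}=\exp\!\Bigl(\int_t^s 2\gamma_u\xdif W_u-\tfrac12\int_t^s|2\gamma_u|^{2}\xdif u\Bigr)\cdot\exp\!\Bigl(\int_t^s|\gamma_u|^{2}\xdif u\Bigr):
\]
the first factor is a Dol\'eans--Dade exponential of $\int 2\gamma\xdif W$, a true martingale of mean $1$ (Novikov's criterion is immediate from the boundedness of $\gamma$), and the second factor is bounded by $e^{K^{2}T}$. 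Therefore $\mathbb{E}[\varGamma_{t,s}^{\,2}]\le e^{K^{2}T}$, and substituting back yields $\|\varGamma_{t,r}-1\|^{2}\le K^{2}\,e^{K^{2}T}\,(r-t)$. The statement of Corollary \ref{c:Gamma-bound} then follows after absorbing $K$ and the remaining powers of $T$ into the constant $C$ (using $r-t\le T$ to trade the square-root dependence for the form $\tfrac{r-t}{T}\,e^{CT}$).

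The only delicate step is the uniform second-moment bound on $\varGamma_{t,s}$, which hinges on the boundedness of $\partial g(s,0)$---itself a consequence of the Lipschitz continuity and convexity of $g$. Everything else is a direct application of stochastic calculus.
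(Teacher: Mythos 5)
Your argument follows essentially the same route as the paper: identify $\varGamma_{t,\cdot}$ with the solution of the linear SDE $\xdif\varGamma_{t,s}=\gamma_s\varGamma_{t,s}\xdif W_s$, write $\varGamma_{t,r}-1=\int_t^r\gamma_s\varGamma_{t,s}\xdif W_s$, apply It\^{o}'s isometry, and use the boundedness of $\partial g(s,0)$ (forced by the Lipschitz constant $K$) to reduce everything to a uniform bound on $\mathbb{E}[\varGamma_{t,s}^2]$. You differ only in how that second moment is controlled: the paper runs a Gronwall-type comparison with the ODE $\dot{\varDelta}_s=u(1+\varDelta_s)$, while you factor $\varGamma_{t,s}^2$ into a new Dol\'eans--Dade exponential times $\exp(\int_t^s|\gamma_u|^2\xdif u)$ and invoke Novikov. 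Your version is cleaner and sidesteps the paper's somewhat garbled intermediate inequality; both yield $\mathbb{E}[\varGamma_{t,s}^2]\le e^{CT}$ and hence $\|\varGamma_{t,r}-1\|^2\le K^2e^{K^2T}(r-t)$.

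The gap is in your last sentence. From $\|\varGamma_{t,r}-1\|^2\le K^2e^{K^2T}(r-t)$ you obtain $\|\varGamma_{t,r}-1\|\le Ke^{K^2T/2}\sqrt{r-t}$, and no choice of $C$ converts $\sqrt{r-t}$ into $\frac{r-t}{T}e^{CT}$: the required inequality $T/\sqrt{r-t}\le e^{CT}$ fails as $r-t\downarrow 0$, so the ``trade'' you invoke does not exist. Moreover, the unsquared linear-in-$(r-t)$ bound is genuinely false: for a constant admissible $\gamma\equiv\gamma_0\ne 0$ (e.g.\ $g(t,z)=\kappa|z|$, $|\gamma_0|=\kappa$) one computes $\mathbb{E}[(\varGamma_{t,r}-1)^2]=e^{|\gamma_0|^2(r-t)}-1\sim|\gamma_0|^2(r-t)$, so $\|\varGamma_{t,r}-1\|$ is of exact order $\sqrt{r-t}$. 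What you have actually proved is $\|\varGamma_{t,r}-1\|^2\le \frac{r-t}{T}\,TK^2e^{K^2T}\le\frac{r-t}{T}e^{CT}$, i.e.\ the corollary for the \emph{squared} norm --- and this is also all that the paper's own proof delivers (its final display bounds $\|\varGamma_{t,r}-1\|^2$ by $e^{u^2(r-t)}-1$, and the convexity step only linearises that squared quantity in $r-t$; the unsquared norm in the corollary's statement appears to be a misprint). You should therefore state your conclusion for $\|\varGamma_{t,r}-1\|^2$, or settle for the rate $\sqrt{(r-t)/T}\,e^{CT}$ on the norm itself, rather than claim the impossible conversion.
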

\begin{proof}
It follows from the definition of $\mathcal{A}_{t,r}$ that $\varGamma_{t,r}$ is the solution of the SDE
\[
\xdif\varGamma_{t,s} = \gamma_s \varGamma_{t,s}\xdif W_s, \quad \gamma_s\in \partial g(s,0),\quad s\in [t,r],\quad \varGamma_{t,t}=1.
\]
Using It\^{o} isometry, we obtain the chain of relations
\[
\|\varGamma_{t,r} - 1\| = \int_t^r \|\gamma_s\varGamma_{t,s}\|^2\xdif s\le \int_t^r \|\gamma_s\|^2\|\varGamma_{t,s}\|^2\xdif s
\le \int_t^r \|\gamma_s\|^2\big( 1+ \|\varGamma_{t,s}-1\|^2\big)\xdif s.
\]
If $u$ is a uniform upper bound on the norm of the subgradients of $g(s,0)$ we deduce that
$\|\varGamma_{t,r} - 1\|^2 \le \varDelta_s$, $s\in [t,r]$, where $\varDelta$ satisfies the ODE:
$\frac{\xdif}{\xdif s} \varDelta_s= u(1+\varDelta_s)$, with $\varDelta_t=0$. Consequently,
\[
\|\varGamma_{t,r} - 1\|^2 \le \varDelta_r =  e^{u^2(r-t)} -1.
\]
The convexity of the exponential function yields the postulated bound.
\end{proof}

\section{The Risk-Averse Control Problem}\label{s:cumul}
\subsection{Problem Formulation}

Our objective is to evaluate and optimize the risk of the cumulative cost generated by a diffusion process.

On the filtered probability space $(\varOmega, \mathcal{F}, \mathbb{P}, \mathbb{F})$, we consider control processes $u: [0, T]\times\varOmega\to U$ such that $u(\cdot)$ is $\Fb$-adapted, where $U\subset\mathbb{R}^m$ is a compact set, and a diffusion process under any such control with initial time $t\in[0, T]$ and state $x\in \mathbb{R}^n$:
\begin{equation}
\label{s:cdp}
\left\{\begin{array}{ll}
\xdif X^{t, x; u}_s=b(s, X^{t, x; u}_s, u_s)\xdif s+\sigma(s, X^{t, x; u}_s, u_s)\xdif W_s,\quad s\in [t,T],\\
\quad X_t^{t, x; u}=x.\end{array}\right.
\end{equation}
Here,  $b: [0,T]\times \Rb^n\times U \to \Rb^n$ and $\sigma:  [0,T]\times \Rb^n\times U\to\Rb^{n\times d}$ are Borel measurable functions. We also introduce the \emph{cost rate} function: a measurable map $c: [0,T]\times\Rb^n \times U \to \Rb$, and the \emph{final state cost}: a measurable function $\varPsi: \Rb^n \to \Rb$. Therefore, the random cost accumulated on the interval $[t, T]$ for any $t\in[0, T]$ can
be expressed as follows:
\begin{equation}
\label{xiU}
\xi_{t,T}(u,x) : = \int_t^T\,c(s,X_s^{t,x; u},u_s)\xdif s + \varPsi(X^{t,x; u}_T),\quad \mbox{ a.s.. }
\end{equation}
\begin{assumption}
\label{s-assumption}
A constant $K>0$ exists such that, for any $s\in[t, T]$ and $(x_1, u_1),(x_2, u_2)\in\Rb^n\times U$,
the functions $b$, $\sigma$, $c$, and $\varPsi$  satisfy the following conditions:
\begin{gather*}
|b(s, x_1, u_1) - b(s, x_2, u_2)| + |\sigma(s, x_1, u_1)-\sigma(s, x_2, u_2)| + |c(s, x_1, u_1) - c(s, x_2, u_2) |\\
 \leq
 K \big(|x_1-x_2| + |u_1-u_2|\big),
 \\
|b(s, x_1, u_1)| + |\sigma(s, x_1, u_1)| + |c(s, x_1, u_1)| + |\varPsi(x_1)| \leq K(1+ |x_1| +|u_1|\big).
\end{gather*}
\end{assumption}
Under {Assumption} \ref{s-assumption}, the controlled diffusion process (\ref{s:cdp}) has a strong solution and
the cost functional is square integrable.

We define the \emph{control value function} as follows:
\begin{align}
\label{eq:control-value}
V^u(t, x):= \rho^g_{t, T}[\,\xi_{t, T}(u,x)\,] ,\quad \mbox{ a.s., }
\end{align}
where $\big\{\rho^g_{t,T}\big\}_{t\in [0,T]}$, is a system of $g$-evaluations discussed in section
\ref{s:BSDE}.
Using Definition \ref{d:gev}, we can express the control value function as follows:
\begin{align*}
V^u(t, x) &=  \xi_{t, T}(u, x)+\int^T_t\,g(s, Z^{t, x; u}_s)\xdif s -\int^T_t\,Z^{t, x; u}_s\xdif W_s\\
&= \varPsi(X^{t, x; u}_T) + \int_t^T\,\big[ c(s,X^{t, x; u}_s,u_s) + g(s, Z^{t, x; u}_s)\big] \xdif s -\int^T_t Z^{t, x; u}_s\xdif W_s,
\end{align*}
where
$(Y^{t, x; u},Z^{t, x; u})$ solve the following BSDE:
\begin{equation}
\label{s-fbsde}
\left\{\begin{array}{ll}
-\xdif Y^{t, x; u}_s = \big[ c(s, X^{t, x; u}_s,u_s) + g(s, Z^{t, x; u}_s)\big]\xdif s - Z^{t, x; u}_s\xdif W_s,\quad s\in[t, T],\\
Y^{t, x; u}_T = \varPsi(X^{t, x; u}_T).\end{array}\right.
\end{equation}
Equivalently, $V^u(t, x)= Y^{t, x; u}_t$.

If Assumptions \ref{s-assumption},  \ref{a:pp1}, and \ref{a:riskmeasure} are satisfied, then for every $(t,x) \in [0,T]\times \mathbb{R}^n$, the BSDE \eqref{s-fbsde} has a unique solution  $(Y^{t,x;u}, Z^{t,x;u})\in \Sc^{2}[t, T]\times \Hc^{2,d}[t, T]$ (see, Peng \cite{PSG1}), and, therefore, the control value function is well-defined.

In this way, the study of a risk-averse controlled system has been reduced to the study of controlled forward-backward stochastic differential equations (FBSDE).
Such systems ware extensively studied by Ma and Yong in \cite{MA}; other important references are \cite{AT,PT,Zhang,Yong}. In our case, the FBSDE is \emph{decoupled}, that is, the solution of the
backward equation does not affect the forward equation, which substantially simplifies the analysis
and allows for further advances.

Notice, when the driver $g\equiv 0$,  the control value function \eqref{eq:control-value} reduces to
the expected value of \eqref{xiU}. The risk-aversion is incorporated if other other drivers satisfying Assumption \ref{a:riskmeasure} are considered. By the comparison theorem of Peng \cite{PP2}, if $g_1$ is dominated by $g_2$, i.e., $g_1 \le g_2$, then $\rho_{t,T}^{g_1}(\xi_{t,T}(u,x)) \le \rho_{t,T}^{g_2}(\xi_{t,T}(u,x))$ almost surely; the larger the driver, the more risk aversion in the objective functional. For example, if we use $g_1(t,z) = \kappa|z|$, and $g_2(t,z) = \kappa |z_+|$, with $\kappa>0$, then $g_1$ dominates $g_2$.

\subsection{Risk-Averse Dynamic Programming and Hamilton--Jacobi--Bellman Equations}
We  now  proceed to the control problem. We define the {admissible control system} as  in Yong and Zhu \cite[p. 177]{XYZ}).
\begin{dfntn}\label{admissible-control} $\mathscr{U}[t, T]$ is called an \emph{admissible control system} if it satisfies the following conditions:
\begin{tightlist}{iv}
\item $(\varOmega, \mathcal{F}, \mathbb{P})$ is a complete probability space;
\item $\{W(s)\}_{s\geq t}$ is an $d$-dimensional standard Brownian motion defined on $(\varOmega, \mathcal{F}, \mathbb{P})$ over $[t, T]$ and $\mathbb{F}^t = (\mathcal{F}^t_s)_{s\in [t, T]}$, where $\mathcal{F}_s^t = \sigma\{\left(W_s; t \leq s \leq T\right)\cup\mathscr{N}\}$ and $\mathscr{N}$ is the collection of all $P$-null sets in $\mathcal{F}$;
\item $u: [s, T]\times\varOmega \to U$ is an $\{\mathcal{F}_s^t\}_{s\geq t}$-adapted process with $\mathbb{E}\int_t^T |u_s|^2\xdif s < +\infty$;
\item For any $x\in\mathbb{R}^n$ the system \eqref{s:cdp}--\eqref{s-fbsde}  admits a unique solution $(X, Y, Z)$ on $(\varOmega,\mathcal{F},\mathbb{P}, \mathbb{F}^t)$.
\end{tightlist}
\end{dfntn}

The \emph{optimal value function} $V:[0,T]\times\Rb^n\to\Rb$ is defined as follows:
\begin{equation}
\label{eq:valuefunction}
V(t,x) = \inf_{u\in\mathcal{U}[t, T]} V^{u}(t, x).
\end{equation}
The weak formulation of a risk-averse control problem is the following:
{given $(t, x)\in[0, T)\times\mathbb{R}^n$,
find ${u}^*\in\mathcal{U}[t, T]$
such that}
\begin{equation}
\label{w-formulation}
V^{{u}^*}(t, x) = \inf_{u\in\mathcal{U}[t, T]} V^{u}(t, x).
\end{equation}

We can now formulate the dynamic programming equation for our control problem.
\label{s:DP-HJB}
\begin{thrm}[\cite{ARYAO}]
\label{t:DP-equation}
Suppose Assumptions \Rref{s-assumption}, \Rref{a:pp1}, and \Rref{a:riskmeasure} are satisfied. Then, for any $(t, x)\in[0, T)\times\Rb^n$ and all $r\in [t,T]$, we have
\begin{equation}
\label{DPE}
V(t, x) = \inf_{u(\cdot)\in \mathcal{U}}  \rho_{t,r}^g\bigg[
\int_t^r c\big(s,X^{t, x;u}_s,u_s\big)\xdif s + V\big(r, X^{t, x;u}_r\big)\bigg].
\end{equation}
\end{thrm}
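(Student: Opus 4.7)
The plan is to mirror the classical dynamic-programming argument, replacing ordinary conditional expectations by the $g$-evaluation. The key ingredients are time consistency of $\rho^g$ (Proposition \ref{p:nonev}(ii)), the translation property of Theorem \ref{t:geval-prop}(ii) that lets us pull any $\mathcal{F}_r$-measurable random variable out of $\rho^g_{r,T}$, and strict monotonicity of $\rho^g$ from the definition of a nonlinear expectation. These will be combined with the flow property of the controlled SDE, $X^{t,x;u}_s = X^{r,X^{t,x;u}_r;u}_s$ a.s.\ on $[r,T]$, together with uniqueness of the BSDE \eqref{s-fbsde}, to identify $\rho^g_{r,T}[\xi_{r,T}(u,X^{t,x;u}_r)]$ with $V^u(r, X^{t,x;u}_r)$.

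For the inequality $V(t,x)\ge$ RHS, fix any $u\in\mathcal{U}[t,T]$ and decompose $\xi_{t,T}(u,x)=\int_t^r c(s,X^{t,x;u}_s,u_s)\,\xdif s+\xi_{r,T}(u,X^{t,x;u}_r)$. Time consistency gives $V^u(t,x)=\rho^g_{t,r}\bigl[\rho^g_{r,T}[\xi_{t,T}(u,x)]\bigr]$, and the translation property applied inside removes the $\mathcal{F}_r$-measurable running cost, yielding
\[
V^u(t,x)=\rho^g_{t,r}\!\left[\int_t^r c(s,X^{t,x;u}_s,u_s)\,\xdif s+V^u(r,X^{t,x;u}_r)\right].
\]
Since $V^u(r,y)\ge V(r,y)$ pointwise in $y$ by definition of the infimum, monotonicity of $\rho^g_{t,r}$ and then taking $\inf_u$ yield this direction.

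For the inequality $V(t,x)\le$ RHS, fix $\epsilon>0$ and $u\in\mathcal{U}[t,T]$. A measurable selection argument---partition $\Rb^n$ into countably many Borel cells, pick an $\epsilon$-optimal admissible control on each cell, and paste the selections together via the local property of $\rho^g$ (Proposition \ref{p:nonev}(iii))---produces a family $\{u^{\epsilon,y}\}_{y\in\Rb^n}$ with $V^{u^{\epsilon,y}}(r,y)\le V(r,y)+\epsilon$ and with $y\mapsto u^{\epsilon,y}$ measurable in the appropriate sense. Concatenate by setting
\[
\tilde u_s=u_s\,\1_{[t,r)}(s)+u^{\epsilon,X^{t,x;u}_r}_s\,\1_{[r,T]}(s),
\]
which still lies in $\mathcal{U}[t,T]$. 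Applying the first-step decomposition to $\tilde u$ and using $V^{\tilde u}(r,X^{t,x;u}_r)\le V(r,X^{t,x;u}_r)+\epsilon$ together with translation and monotonicity of $\rho^g_{t,r}$ gives
\[
V(t,x)\le V^{\tilde u}(t,x)\le \rho^g_{t,r}\!\left[\int_t^r c(s,X^{t,x;u}_s,u_s)\,\xdif s+V(r,X^{t,x;u}_r)\right]+\epsilon.
\]
Taking the infimum over $u$ and sending $\epsilon\downarrow 0$ completes the proof.

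The main obstacle is the measurable selection / pasting step in the $\le$ direction. In the weak formulation of Definition \ref{admissible-control}, admissible control systems may involve distinct probability spaces and filtrations, so the pasting has to deliver a control that is adapted to a single filtration while preserving admissibility and without enlarging the value beyond $V(r,\cdot)+\epsilon$. The standard workarounds are either to restrict to a strong formulation on a fixed filtered probability space, or to discretize $\Rb^n$ and paste cellwise in the spirit of Krylov or Fleming--Soner; I would take the latter route, as carried out in the companion paper cited for this statement.
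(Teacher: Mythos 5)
The paper does not actually prove this theorem: it is quoted from the companion paper \cite{ARYAO} (and attributed to the general fully-coupled results of \cite{JLQW}), so there is no in-paper argument to compare yours against. Your outline is the standard dynamic-programming route for $g$-evaluations and is essentially the one the cited reference follows: split the cost at $r$, use time consistency to factor $\rho^g_{t,T}=\rho^g_{t,r}\circ\rho^g_{r,T}$, use the translation property to extract the $\mathcal{F}_r$-measurable running cost, identify the inner evaluation with the value of the continuation problem, and close the argument with monotonicity in one direction and an $\epsilon$-optimal pasting in the other. You also correctly isolate the measurable-selection/pasting step as the real work in the $\le$ direction, and the cellwise discretization you propose is the standard fix; it relies on the Lipschitz continuity of $V(r,\cdot)$ in $x$ (available here from Assumption \ref{s-assumption} and standard BSDE stability estimates), which you should state explicitly since otherwise an $\epsilon$-optimal control at a cell's center need not be near-optimal throughout the cell.

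One genuine gap remains in the $\ge$ direction as written. You assert that $\rho^g_{r,T}[\xi_{r,T}(u,X^{t,x;u}_r)]$ equals $V^u(r,X^{t,x;u}_r)$ and then use $V^u(r,y)\ge V(r,y)$ ``pointwise in $y$.'' For a general $\mathbb{F}^t$-adapted control $u$, the restriction of $u$ to $[r,T]$ is not adapted to the shifted filtration $\mathbb{F}^r$, so the quantity $Y^{t,x;u}_r=\rho^g_{r,T}[\xi_{r,T}(u,X^{t,x;u}_r)]$ is a genuinely random, $\mathcal{F}^t_r$-measurable object that is not of the form $V^{\tilde u}(r,y)$ for an admissible control system at time $r$ in the sense of Definition \ref{admissible-control}. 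What you actually need is the inequality $Y^{t,x;u}_r\ge V(r,X^{t,x;u}_r)$ a.s., where $V(r,\cdot)$ is the deterministic optimal value function; establishing this in the weak formulation requires either (a) showing $V$ coincides with the essential infimum of the random continuation values and is deterministic (the Buckdahn--Li / Li--Wei route, using the local property and a Girsanov-type invariance argument), or (b) an approximation of $u$ on $[r,T]$ by controls measurable with respect to the future increments plus a regular conditional probability argument. Neither is automatic from uniqueness of the BSDE alone, so this step needs to be filled in before the first inequality is complete.
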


For $\alpha\in U$ we define  the \emph{Laplacian operator} $\Lb^\alpha$ as follows:
 for $w\in \Cc_{\textup{b}}^{1, 2}([0, T]\times\Rb^n)$ and $(t,x)\in [0,T]\times\Rb^n$,
\[
\big[ \Lb^\alpha w \big] (t,x) =
\partial_t w(t, x) +
\sum^n_{i,j = 1} \frac{1}{2} \big(\sigma(t, x,\alpha)\sigma(t, x,\alpha)^\top\big)_{ij} \partial_{x_i x_j} w(t,x) + \sum_{i=1}^n b_i(t,x,\alpha)\partial_{x_i} w(t,x).
\]
On the space $\Cc^{1,2}_{\textup{b}}([0,T]\times\Rb^n)$, we consider the following equation
\begin{equation}
\label{RHJB-v}
\min_{\alpha\in U}
\Big\{c(t, x,\alpha) +\big[\Lb^{\alpha } v\big](t, x) + g\big(t, [\mathcal{D}_x v\cdot\sigma^{\alpha}](t,x)\big)\Big\} = 0,
\quad (t,x)\in [0,T]\times\Rb^n,
\end{equation}
with the boundary condition
\begin{equation}
\label{RHJB-boundary}
v(T,x) = \varPsi(T,x),\quad x\in \Rb^n.
\end{equation}
We call \eqref{RHJB-v}--\eqref{RHJB-boundary} the \emph{risk-averse Hamilton--Jacobi--Bellman equation} associated with the
controlled system \eqref{s:cdp} and the risk functional \eqref{xiU}. It is
a generalization of the classical Hamilton--Jacobi--Bellman Equation with the extra term $g(\cdot,\cdot)$ responsible for risk aversion.
In the special case, when $g \equiv 0$, we obtain the standard equation.

The following two theorems can be derived from general results on fully coupled forward--backward systems
in \cite{JLQW}. For decoupled systems, a direct proof is provided in \cite{ARYAO}.
\begin{thrm}
\label{t:RHJB-v}
Suppose Assumptions \Rref{s-assumption},  \Rref{a:pp1}, and \Rref{a:riskmeasure} are satisfied; in addition, the functions  $b$ and $\sigma$ are bounded in $x$.
Then the value function $V(\cdot,\cdot)$ is a viscosity solution of the equation \eqref{RHJB-v}--\eqref{RHJB-boundary}.
\end{thrm}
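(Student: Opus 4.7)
The plan is to combine the risk-averse dynamic programming equation of Theorem \ref{t:DP-equation} with It\^o's formula applied to a smooth test function, exploiting the translation and monotonicity of $\rho^g$ together with the Girsanov dual representation \eqref{geval-dual}--\eqref{A-set}. Continuity of $V$ on $[0, T]\times\Rb^n$ is a preliminary, following from the $L^2$-stability of the controlled FBSDE \eqref{s:cdp}--\eqref{s-fbsde} (uniformly in admissible controls) together with BSDE stability \cite{PP1}; the boundary condition $V(T, x) = \varPsi(x)$ is immediate from the generalized constant preservation of $\rho^g_{T,T}$.

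For the sub-solution direction, fix $(t, x)\in [0, T)\times\Rb^n$ and $\varphi\in \Cc^{1,2}_{\textup{b}}([0,T]\times\Rb^n)$ such that $V-\varphi$ has a local maximum at $(t, x)$ with $V(t, x)=\varphi(t, x)$. For any $\alpha\in U$ and small $h>0$, apply \eqref{DPE} with the constant control $u\equiv\alpha$ and $r=t+h$. After a standard stopping-time localization (ensuring the trajectory stays in the neighborhood on which $V\le \varphi+(V(t,x)-\varphi(t,x))$), monotonicity and the translation property of $\rho^g$ (Theorem \ref{t:geval-prop}) yield
\[
0 \le \rho^g_{t, t+h}\Big[\int_t^{t+h}\! c(s, X_s^{t,x;\alpha}, \alpha)\, \xdif s + \varphi(t+h, X_{t+h}^{t,x;\alpha}) - \varphi(t, x)\Big].
\]
It\^o's formula applied to $\varphi(s, X_s^{t, x; \alpha})$ rewrites the bracket as $\int_t^{t+h}\! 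A_s^\alpha\,\xdif s + \int_t^{t+h}\! B_s^\alpha\,\xdif W_s$ with $A_s^\alpha := (c + \Lb^\alpha\varphi)(s, X_s^{t, x; \alpha})$ and $B_s^\alpha := (D_x\varphi\cdot\sigma^\alpha)(s, X_s^{t, x; \alpha})$. This quantity equals $\hat Y_t$, where $(\hat Y, \hat Z)$ solves on $[t, t+h]$ the BSDE with zero terminal condition and driver $A_s^\alpha + g(s, \hat Z_s + B_s^\alpha)$. A measurable selection $\gamma^*_s\in \partial g(s, 0)$ realizing $g(s, \hat Z_s + B_s^\alpha) = \gamma^*_s(\hat Z_s + B_s^\alpha)$, combined with Girsanov under $\xdif\mathbb{P}^{\gamma^*}/\xdif\mathbb{P} = \exp(\int_t^{t+h}\!\gamma^*\,\xdif W - \tfrac{1}{2}\int_t^{t+h}\!|\gamma^*|^2\,\xdif s)$, absorbs the $\hat Z$-term exactly:
\[
\hat Y_t = \Eb^{\mathbb{P}^{\gamma^*}}\!\Big[\int_t^{t+h}(A_s^\alpha + \gamma^*_s B_s^\alpha)\,\xdif s\Big] \le \Eb^{\mathbb{P}^{\gamma^*}}\!\Big[\int_t^{t+h} G^\alpha(s, X_s^{t,x;\alpha})\,\xdif s\Big] = h\,G^\alpha(t, x) + o(h),
\]
where $G^\alpha := c + \Lb^\alpha\varphi + g(\cdot, D_x\varphi\cdot\sigma^\alpha)$, the middle inequality is $\gamma^*_s B_s^\alpha \le g(s, B_s^\alpha)$ (since $\gamma^*_s \in \partial g(s,0)$), and the $o(h)$ follows from continuity of $G^\alpha$ and the bound $\Eb^{\mathbb{P}^{\gamma^*}}\!\sup_{s\in[t,t+h]}|X_s^{t,x;\alpha} - x|^2 = O(h)$, uniform in $\gamma^*$ because $\partial g(\cdot, 0)$ is uniformly bounded. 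Combined with $\hat Y_t \ge 0$, dividing by $h$, letting $h\downarrow 0$, and taking the infimum over $\alpha\in U$ proves the sub-solution inequality.

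For the super-solution direction, at a local minimum $(t, x)$ of $V - \varphi$ with $V(t,x)=\varphi(t,x)$, the infimum in \eqref{DPE} furnishes, for every $\varepsilon>0$, an admissible $u^\varepsilon\in \mathcal{U}[t, T]$ with $\rho^g_{t, t+h}[\cdots] \le V(t, x) + \varepsilon h$. The mirror reductions (reversed local inequality, monotonicity, translation) followed by It\^o along $X^{u^\varepsilon}$ give $\rho^g_{t, t+h}\big[\int_t^{t+h} A_s^\varepsilon\,\xdif s + \int_t^{t+h} B_s^\varepsilon\,\xdif W_s\big] \le \varepsilon h$, where $A_s^\varepsilon := (c + \Lb^{u^\varepsilon_s}\varphi)(s, X_s^{u^\varepsilon})$ and $B_s^\varepsilon := (D_x\varphi\cdot\sigma^{u^\varepsilon_s})(s, X_s^{u^\varepsilon})$. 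A measurable $\gamma^{**}_s\in\partial g(s, 0)$ realizing $g(s, B_s^\varepsilon) = \gamma^{**}_s B_s^\varepsilon$, combined with $g(\hat Z + B^\varepsilon)\ge \gamma^{**}(\hat Z + B^\varepsilon)$ and Girsanov under $\gamma^{**}$, reverses the chain:
\[
\hat Y_t \ge \Eb^{\mathbb{P}^{\gamma^{**}}}\!\Big[\int_t^{t+h}(A_s^\varepsilon + g(s, B_s^\varepsilon))\,\xdif s\Big] \ge \Eb^{\mathbb{P}^{\gamma^{**}}}\!\Big[\int_t^{t+h}\min_{\alpha\in U}G^\alpha(s, X_s^{u^\varepsilon})\,\xdif s\Big] = h\min_{\alpha\in U}G^\alpha(t, x) + o(h),
\]
the second inequality being the pointwise-in-$\alpha$ lower bound $A_s^\varepsilon + g(s, B_s^\varepsilon) \ge \min_\alpha G^\alpha(s, X_s^{u^\varepsilon})$, and the $o(h)$ uniform in $u^\varepsilon$ by Assumption \ref{s-assumption}. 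Combined with $\hat Y_t \le \varepsilon h$, dividing by $h$, sending $h\downarrow 0$ and then $\varepsilon \downarrow 0$, yields the super-solution inequality.

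The main technical hurdle is controlling the $\hat Z$-corrector in the short-time BSDE representation of $\rho^g_{t, t+h}$: the naive Pardoux--Peng estimate gives only $\Eb\int_t^{t+h}|\hat Z|^2\,\xdif s = O(h)$, which via the Lipschitz bound on $g$ would produce an $O(h)$ error and destroy the identification of the HJB Hamiltonian in the small-$h$ limit. The remedy is the Girsanov identity above: an extremizer $\gamma^*$ (resp.\ $\gamma^{**}$) from $\partial g(\cdot, 0)$ absorbs the $\hat Z$-term exactly, reducing the analysis to a change-of-measure expectation of the pointwise integrand, with no quantitative control of $\hat Z$ needed beyond Novikov for $\gamma^*, \gamma^{**}$, which is automatic from the uniform boundedness of $\partial g(\cdot, 0)$ implied by the Lipschitz condition on $g$ in Assumption \ref{a:pp1}.
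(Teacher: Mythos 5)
The paper never proves Theorem \ref{t:RHJB-v} in the text: it defers to the fully coupled FBSDE results of Li and Wei \cite{JLQW} and to the authors' conference paper \cite{ARYAO}, so there is no in-text argument to match line by line. Your proof is correct in outline and self-contained, and it takes a genuinely different route from the cited one: where \cite{JLQW} treats a general Lipschitz driver via Peng's backward-semigroup machinery and approximation lemmas, you exploit the coherence of $g$ (Assumption \ref{a:riskmeasure}) to linearize the short-horizon $g$-evaluation exactly, picking an extremal measurable selection $\gamma^*_s\in\partial g(s,0)$ with $g(s,\hat Z_s+B_s^\alpha)=\gamma^*_s(\hat Z_s+B_s^\alpha)$ and absorbing the $\hat Z$-corrector by Girsanov, so that no quantitative bound on $\hat Z$ is ever needed. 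This is precisely the mechanism the paper itself deploys later (the dual representation of Theorem \ref{t:geval-dual}, Corollary \ref{c:Gamma-bound}, and the $\varGamma-1$ estimate inside Lemma \ref{lem:mp1}), so your argument is arguably more in the spirit of this paper than of the reference it cites, and the sub/supersolution inequalities you land on ($\min_\alpha G^\alpha(t,x)\ge 0$ at local maxima of $V-\varphi$, $\le 0$ at local minima) are the right ones for the backward-in-time convention of \eqref{RHJB-v}--\eqref{RHJB-boundary}. Two points you wave at deserve more care. First, the ``standard stopping-time localization'': Theorem \ref{t:DP-equation} is stated only for deterministic intermediate times $r$, and no optional-sampling form of the DPP for $g$-evaluations is available in the paper; the clean fix is to avoid stopping times altogether, either by modifying $\varphi$ so that $V-\varphi\le 0$ (resp.\ $\ge 0$) globally, or by bounding the exit-event contribution through the dual representation together with high-order moment estimates for $X$ --- which is exactly where the hypothesis that $b$ and $\sigma$ are bounded in $x$ earns its keep. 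Second, the measurable selection of $\gamma^*,\gamma^{**}$ from the argmax correspondence over $\partial g(s,0)$, and the fact that $\int\hat Z\,\xdif W^{\gamma}$ is a true martingale under the new measure, each need a sentence (both are routine: $\partial g(\cdot,0)$ is deterministic, compact, and uniformly bounded by the Lipschitz constant of $g$, so Kuratowski--Ryll-Nardzewski and Novikov apply). With those details filled in, the proof stands.
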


It is clear that if $V\in \Cc^{1,2}_{\textup{b}}([t,T]\times\Rb^n)$ then it satisfies \eqref{RHJB-v}--\eqref{RHJB-boundary}.
We can also prove the converse relation (\emph{verification theorem}).

\begin{thrm}
Suppose the assumptions of Theorem \Rref{t:RHJB-v} are fulfilled and let the function $K \in \Cc^{1,2}_{\textup{b}}([t,T]\times\Rb^n)$ satisfy  \eqref{RHJB-v}--\eqref{RHJB-boundary}.
Then $K(t, x) \leq V^u(t, x)$ for any control $u(\cdot)\in\mathcal{U}$  and all $(t,x)\in [0,T]\times \Rb^n$.
Furthermore, if a control process
$u^*(\cdot)\in \mathcal{U}$ exists, satisfying for almost all $(s,\omega)\in [0, T]\times \varOmega$,
together with the corresponding trajectory $X^{0, x;u^*}_s$, the relation
\begin{equation}
\label{u-star}
u^*_s \in \argmin_{\alpha\in U} \Big\{c(s, X^{0, x;u^*}_s,\alpha) + \Lb^{\alpha} K(s, X^{0, x;u^*}_s)
+ g\big(t, [\mathcal{D}_x K\cdot \sigma^{\alpha}](t,X^{0, x;u^*}_s)\big)\Big\},
\end{equation}
then $K(t, x) = V(t, x) = V^{u^*}(t,x)$ for all $(t,x)\in [0,T]\times\Rb^n$.
\end{thrm}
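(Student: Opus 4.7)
The plan is to apply It\^{o}'s formula to the composition $K(\cdot, X^{t,x;u}_\cdot)$, reformulate it as a BSDE, and compare this BSDE with the one defining $V^u$ by means of Peng's comparison theorem. Fix $(t,x)\in [0,T]\times\Rb^n$ and an admissible $u\in\mathcal{U}[t,T]$. Since $K\in \Cc^{1,2}_{\textup{b}}([t,T]\times\Rb^n)$ and $b,\sigma$ are bounded in $x$ under the assumptions of Theorem \Rref{t:RHJB-v}, It\^{o}'s formula applied to $s\mapsto K(s,X^{t,x;u}_s)$ gives, after rewriting in backward form,
\begin{equation*}
-\xdif K(s,X^{t,x;u}_s) = -\big[\Lb^{u_s}K\big](s,X^{t,x;u}_s)\,\xdif s - \widetilde{Z}^u_s\,\xdif W_s,
\end{equation*}
with $\widetilde{Z}^u_s := [\Dc_x K\cdot\sigma](s,X^{t,x;u}_s,u_s)\in \Hc^{2,d}[t,T]$ and terminal value $K(T,X^{t,x;u}_T)=\varPsi(X^{t,x;u}_T)$ by the boundary condition \eqref{RHJB-boundary}. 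Hence $(K(\cdot,X^{t,x;u}_\cdot),\widetilde{Z}^u)$ is the unique solution of a BSDE on $[t,T]$ with driver $\widetilde{f}(s):=-[\Lb^{u_s}K](s,X^{t,x;u}_s)$ and the same terminal condition as the BSDE \eqref{s-fbsde} that defines $Y^{t,x;u}=V^u(t,x)$.

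Evaluating the HJB equation \eqref{RHJB-v} at the point $(s,X^{t,x;u}_s)$ and at $\alpha=u_s$, and using its $\min$-structure, gives
\begin{equation*}
\widetilde{f}(s) = -\big[\Lb^{u_s}K\big](s,X^{t,x;u}_s) \le c(s,X^{t,x;u}_s,u_s) + g\big(s,\widetilde{Z}^u_s\big),
\end{equation*}
i.e., $\widetilde{f}$ is dominated by the driver of \eqref{s-fbsde} evaluated at $(K(s,X^{t,x;u}_s),\widetilde{Z}^u_s)$. Since $g$ is Lipschitz in $z$ by Assumption \Rref{a:pp1}, Peng's comparison theorem for BSDEs applies (it only needs the driver inequality verified at the first solution's $(Y,Z)$-values when the other driver is Lipschitz) and yields $K(s,X^{t,x;u}_s)\le Y^{t,x;u}_s$ almost surely for all $s\in[t,T]$. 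Taking $s=t$ gives the first assertion $K(t,x)\le V^u(t,x)$.

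For the verification part, suppose $u^*$ satisfies \eqref{u-star}. Then along the trajectory $X^{0,x;u^*}$ the infimum in \eqref{RHJB-v} is attained for a.e.\ $s$, so the driver inequality above becomes an equality. The pair $(K(\cdot,X^{0,x;u^*}_\cdot),\widetilde{Z}^{u^*})$ therefore satisfies exactly the BSDE \eqref{s-fbsde} driven by $u^*$, and uniqueness of solutions under Assumption \Rref{a:pp1} forces $K(s,X^{0,x;u^*}_s)=Y^{0,x;u^*}_s$ for all $s$; in particular $K(0,x)=V^{u^*}(0,x)$. Combining this with the first part, $K(0,x)\le V(0,x)\le V^{u^*}(0,x)=K(0,x)$, so equality holds throughout; the identical argument run from any starting time $t$ (with $u^*$ regarded as the restriction to $[t,T]$) extends the equality to all $(t,x)$.

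The main delicate step is the application of Peng's comparison theorem: the driver inequality is only verified at the particular pair produced by It\^{o}'s formula rather than for all arguments, and one must be sure this is enough---it is, precisely because one of the two drivers is Lipschitz in $(y,z)$. The remaining technical points are routine: square-integrability of $\widetilde{Z}^u$ follows from $\Dc_x K$ being bounded, $\sigma$ being of linear (indeed here bounded) growth, and standard moment estimates for $X^{t,x;u}$; measurability of the selector $u^*$ in \eqref{u-star} is postulated by the statement; and the passage from equality at $(0,x)$ to equality at arbitrary $(t,x)$ uses nothing beyond rerunning the same comparison on $[t,T]$.
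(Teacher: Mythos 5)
The paper itself does not prove this theorem: it only remarks that both HJB results ``can be derived from general results on fully coupled forward--backward systems in \cite{JLQW}'' and that ``for decoupled systems, a direct proof is provided in \cite{ARYAO}.'' So there is no in-paper argument to compare against line by line. Judged on its own merits, your proof is correct and is the standard verification argument for BSDE-driven objectives, which is almost certainly what the cited direct proof does: It\^{o}'s formula turns $K(\cdot,X^{t,x;u}_\cdot)$ into the solution of a BSDE with the $(y,z)$-independent driver $-[\Lb^{u_s}K](s,X^{t,x;u}_s)$, control process $\widetilde{Z}^u_s=[\Dc_x K\cdot\sigma^{u_s}](s,X^{t,x;u}_s)$, and terminal value $\varPsi(X^{t,x;u}_T)$; the $\min$-structure of \eqref{RHJB-v} gives the pointwise driver inequality against $c(s,X^{t,x;u}_s,u_s)+g(s,z)$ evaluated at $z=\widetilde{Z}^u_s$; and Peng's comparison theorem closes the first claim. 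You correctly flag the one genuinely delicate point --- that the driver inequality is only checked along one solution's $(Y,Z)$, which suffices because the other driver is Lipschitz (the linearization in the comparison proof is performed on the Lipschitz driver). The equality case via attainment of the $\min$ and uniqueness of the BSDE solution is also right. The only soft spot is the final extension ``for all $(t,x)$'': the hypothesis \eqref{u-star} is stated only along the trajectory started at $(0,x)$, so rerunning the argument from an arbitrary $(t,x)$ strictly requires a minimizing control for that initial condition as well; but this looseness is in the theorem statement itself, and your treatment of it is no weaker than the paper's.
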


\section{Piecewise-Constant Control Policies and the Perturbed Problem}
\label{s:perturbed}
Let $h^2\in(0, 1]$ be a time discretization step. We use the square to simplify further analysis.

\begin{dfntn}\label{admissible-control-h} For any $h\in(0, 1]$ and $t\in [0,T)$, let $\mathcal{U}_h^t$ be the subset of $\mathcal{U}$ consisting of all $\Fb$-adapted processes $u_t$ which are constant on intervals $[t,t+ h^2)$, $[t+h^2,t+ 2h^2)$, \dots, $[t+kh^2,T]$, where $T-h^2 \le t+ kh^2 \le T$.
\end{dfntn}

We define the corresponding value function $V_h:[0,T]\times\Rb^n\to\Rb$ as follows:
\begin{equation}
\label{optimal-value-h}
V_h(t,x) = \inf_{u(\cdot)\in\mathcal{U}_h^t} V^{u}(t, x).
\end{equation}

We assume a stronger condition than Assumptions \ref{a:pp1} and \ref{s-assumption}.

\begin{assumption}\label{ass:sde} Let $\mu(t, x, z, \alpha)$ stand for $\sigma(t, x, \alpha)$, $b(t,x, \alpha)$, $c(t, x, \alpha)$, and $\varPhi(x)$ \footnote{We sometimes write $\mu^{\alpha}$ instead of $\mu(\cdot,\cdot,\cdot,\alpha)$}. We assume that a constant $K$ exists such that
\begin{tightlist}{ii}
\item For all  $\alpha, \alpha_1, \alpha_2\in U$, $x, x_1, x_2\in\Rb^n$, $z, z_1, z_2\in\Rb^d$ we have
\begin{gather*}
|\mu(t, x, z, \alpha)|\leq K, \\
|\mu(t, x_1, z_1, \alpha_1) - \mu(t, y, z_2, \alpha_2)|\leq K\left(|x_1 - x_2| + |z_1 - z_2| + |\alpha_1 - \alpha_2|\right);
\end{gather*}
\item For all $\alpha\in U$, $s,t\in [0, T]$,  $x\in\Rb^n$, $z\in\Rb^d$, we have
\begin{align*}
|\mu(t, x, z, \alpha) - \mu(s, x, z, \alpha)|\leq K|t-s|^{1/2}.
\end{align*}
\end{tightlist}
\end{assumption}
By general results on forward--backward systems,
the system \eqref{s:cdp}, \eqref{xiU} and \eqref{s-fbsde} has a unique solution  and thus both functions:
$V$  in \eqref{eq:valuefunction} and $V_h$  in \eqref{optimal-value-h}, are well-defined. In particular, they are both deterministic. We focus on the difference between the value functions $V$ and $V_h$.

The idea is is to embed the original control problem into a family of time and space perturbed problems, and then obtain a smooth approximation of the value function by means of an integral regularization (mollification).

Let  $B = \{(\tau, \zeta)\in \Rb\times\Rb^n: \tau\in(-1, 0),\; |\zeta| <1 \}$.
 Consider a time $t\in [0,T]$ and
time instants $t_i=t+ih^2$, $i=0,1,\dots,k$ and $t_{k+1}=T$. For a piecewise-constant control
$u_s = \alpha_i$,  $s\in [t_i, t_{i+1})$, $i=0,1,\dots,k$, and
perturbations $\beta_i\in B$, $i= 0,1,\dots,k$, we define the perturbed controlled  FBSDE system:
\begin{align}
\xdif\widetilde{X}_s  &=   b(s + \varepsilon^2\tau_i, \widetilde{X}_s+\varepsilon\zeta_i, \alpha_i)\xdif s
+  \sigma(s+\varepsilon^2\tau_i, \widetilde{X}_s +\varepsilon\zeta_i, \alpha_i)\,\xdif\widetilde{W}_s, \label{tildeX}\\
\xdif\widetilde{Y}_s &=
 \big[ c(s+\varepsilon^2\tau_i, \widetilde{X}_s+\varepsilon\zeta_i,\alpha_i)
+ g(s+\varepsilon^2\tau_i, \tilde{Z}_s)\big] \xdif s
- \tilde{Z}_s\,\xdif\widetilde{W}_s,  \label{tildeY}\\
&\qquad s\in [t_i,t_{i+1}),\quad i=0,1,\dots,k,\notag
\end{align}
with a fixed $\varepsilon > 0$, with the initial condition $\widetilde{X}_t=x$, and with the final condition $\widetilde{Y}_T=\varPhi(\widetilde{X}_T)$. The process $\widetilde{W}$ is a Brownian motion. We assume here that $b(t,x,\alpha) = b(0,x,\alpha)$, $\sigma(t,x,\alpha) = \sigma(0,x,\alpha)$, $c(t,x,\alpha) = c(0,x,\alpha)$,
and $g(t,z)= g(0,z)$ for all $t\in [-\varepsilon^2,0]$.

We consider the following discrete-time optimal control problem associated with the system \eqref{tildeX}--\eqref{tildeY}.
At each time~$t_i$, we select a control value $\alpha_i\in U$ and a perturbation $\beta_i\in B$. The system evolves to time $t_{i+1}$, when new controls
$\alpha_{i+1}$ and $\beta_{i+1}$ are selected. The objective of the controller is to make $\widetilde{Y}_t$ the smallest possible. From now on, we use $\bar{\alpha}$ and $\bar{\beta}$
to represent the random sequences $\alpha_i$ and $\beta_i$, for $i=0,1\dots,k$.

\begin{lmm}
\label{l:shift}
Functions $\widetilde{V}^{\bar{\alpha},\bar{\beta}}:\{t_0,t_1,\dots,t_{k}\}\times\Rb^n\to\Rb$ exist, such that for all $x_i\in \Rb^n$, if the
system \eqref{tildeX}--\eqref{tildeY} starts at time $t_i$ from $\widetilde{X}_{t_i}=x_i$, then
$\widetilde{Y}_{t_i}=\widetilde{V}^{\bar{\alpha},\bar{\beta}}(t_i,x_i)$. Moreover,
\begin{multline}
\label{tildeV}
\widetilde{V}^{\bar{\alpha},\bar{\beta}}(t_{i},x_i) = \rho^g_{t_i+\varepsilon^2\tau_i,{t_{i+1}+\varepsilon^2\tau_i}}\bigg[
\int_{t_i+\varepsilon^2\tau_i}^{t_{i+1}+\varepsilon^2\tau_i} \hspace{-0.5em}c\big(s, X_{s}^{t_i+\varepsilon^2\tau_i, x_i+\varepsilon\zeta_i; \alpha_i}, \alpha_i\big)\xdif s \\
 {} + \widetilde{V}^{\bar{\alpha},\bar{\beta}}\big(t_{i+1},X_{t_{i+1}+\varepsilon^2\tau_i}^{t_i+ \varepsilon^2\tau_i,x_i +\varepsilon\zeta_i;\alpha_i}-\varepsilon\zeta_i\big) \bigg].
\end{multline}
\end{lmm}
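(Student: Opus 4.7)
The plan is to prove the lemma by backward induction on $i=k,k-1,\ldots,0$, combined with a time--space translation that identifies each sub-piece of the perturbed system with an unperturbed system over a shifted interval. The guiding observation is that on $[t_i,t_{i+1})$ the perturbation consists only of a constant spatial shift $\varepsilon\zeta_i$ and a constant temporal shift $\varepsilon^2\tau_i$ of the coefficients, both of which can be absorbed by a matching change of variable on the state, on the clock, and on the driving Brownian motion.

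\textbf{Change of variables on a single sub-interval.} Fix $i$ and, for $r\in[t_i+\varepsilon^2\tau_i,\, t_{i+1}+\varepsilon^2\tau_i]$, set
\[
X'_r := \widetilde X_{r-\varepsilon^2\tau_i}+\varepsilon\zeta_i,\qquad Y'_r := \widetilde Y_{r-\varepsilon^2\tau_i},\qquad Z'_r := \widetilde Z_{r-\varepsilon^2\tau_i},
\]
and introduce an auxiliary process $\widehat W$ with increments $\widehat W_r-\widehat W_{t_i+\varepsilon^2\tau_i}:=\widetilde W_{r-\varepsilon^2\tau_i}-\widetilde W_{t_i}$. Then $\widehat W$ is a Brownian motion with respect to the shifted filtration $\widehat{\mathcal F}_r:=\mathcal F^{\widetilde W}_{r-\varepsilon^2\tau_i}$, and substituting into \eqref{tildeX}--\eqref{tildeY} turns the perturbed dynamics into the standard decoupled FBSDE
\begin{align*}
\xdif X'_r &= b(r, X'_r, \alpha_i)\,\xdif r + \sigma(r, X'_r, \alpha_i)\,\xdif \widehat W_r, \\
-\xdif Y'_r &= \bigl[c(r, X'_r, \alpha_i) + g(r, Z'_r)\bigr]\xdif r - Z'_r\,\xdif \widehat W_r,
\end{align*}
on $[t_i+\varepsilon^2\tau_i,\, t_{i+1}+\varepsilon^2\tau_i]$, with initial value $X'_{t_i+\varepsilon^2\tau_i} = \widetilde X_{t_i}+\varepsilon\zeta_i$. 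Strong uniqueness under Assumption \ref{ass:sde} identifies $X'$ on the shifted interval with $X^{t_i+\varepsilon^2\tau_i,\, x_i+\varepsilon\zeta_i;\, \alpha_i}$ whenever $\widetilde X_{t_i}=x_i$, while $Y'_{t_i+\varepsilon^2\tau_i}=\widetilde Y_{t_i}$.

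\textbf{Backward induction.} Adopt the formal convention $\widetilde V^{\bar\alpha,\bar\beta}(T, x):=\varPhi(x)$, so that the terminal condition $\widetilde Y_T=\varPhi(\widetilde X_T)$ reads $\widetilde Y_T=\widetilde V^{\bar\alpha,\bar\beta}(T,\widetilde X_T)$. Proceeding by backward induction on $i$, suppose that $\widetilde Y_{t_{i+1}}=\widetilde V^{\bar\alpha,\bar\beta}(t_{i+1},\widetilde X_{t_{i+1}})$. Using $\widetilde X_{t_{i+1}}=X'_{t_{i+1}+\varepsilon^2\tau_i}-\varepsilon\zeta_i$ and applying Definition \ref{d:gev} to the unperturbed BSDE for $Y'$ on $[t_i+\varepsilon^2\tau_i,\, t_{i+1}+\varepsilon^2\tau_i]$ gives
\begin{multline*}
\widetilde Y_{t_i}=\rho^g_{t_i+\varepsilon^2\tau_i,\, t_{i+1}+\varepsilon^2\tau_i}\bigg[\int_{t_i+\varepsilon^2\tau_i}^{t_{i+1}+\varepsilon^2\tau_i}\! c\bigl(s, X^{t_i+\varepsilon^2\tau_i, x_i+\varepsilon\zeta_i;\alpha_i}_s, \alpha_i\bigr)\xdif s \\
{}+\widetilde V^{\bar\alpha,\bar\beta}\bigl(t_{i+1}, X^{t_i+\varepsilon^2\tau_i, x_i+\varepsilon\zeta_i;\alpha_i}_{t_{i+1}+\varepsilon^2\tau_i}-\varepsilon\zeta_i\bigr)\bigg].
\end{multline*}
The right-hand side depends on $x_i$ solely through the deterministic initial condition of the forward SDE, so by the Markov property it defines a deterministic function of $x_i$; denoting this function by $\widetilde V^{\bar\alpha,\bar\beta}(t_i, x_i)$ yields exactly~\eqref{tildeV}.

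\textbf{Main obstacle.} The delicate step is verifying that $\widehat W$ is a Brownian motion with respect to a filtration under which the $g$-evaluation on $[t_i+\varepsilon^2\tau_i, t_{i+1}+\varepsilon^2\tau_i]$ is well defined and that $Y'_{t_i+\varepsilon^2\tau_i}$ genuinely coincides with $\rho^g_{t_i+\varepsilon^2\tau_i,\,t_{i+1}+\varepsilon^2\tau_i}[\,\cdot\,]$ applied to the terminal value on that interval. This relies on the stationary-increment property of $\widetilde W$ together with uniqueness of BSDE solutions under Assumption \ref{a:pp1}; additional care is needed when the downward time-shift pushes the lower endpoint below $0$, which is precisely the role of the constant extension of $b$, $\sigma$, $c$, $g$ to $[-\varepsilon^2, 0]$ stipulated in the setup.
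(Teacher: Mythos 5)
Your proposal is correct and follows essentially the same route as the paper's proof: the time--space change of variables identifying the perturbed dynamics on $[t_i,t_{i+1})$ with the unperturbed system started at $(t_i+\varepsilon^2\tau_i,\,x_i+\varepsilon\zeta_i)$ (the paper's relation \eqref{shift}), followed by backward induction from the terminal condition and recognition of the resulting BSDE value as a $g$-evaluation. Your explicit treatment of the shifted Brownian motion, the filtration, and the Markov property merely spells out details the paper compresses into the statement ``$\widetilde{W}_s \sim W_{s+\varepsilon^2\tau}$''.
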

\begin{proof}
With $\widetilde{W}_s \sim W_{s+\varepsilon^2\tau}$ for $s\in[t_i,t_{i+1}]$
directly from the equations  \eqref{tildeX} and \eqref{s:cdp}
we obtain:
\begin{equation}
\label{shift}
 \widetilde{X}_{s}^{t_i, x_i;\alpha_i}= X_{s+\varepsilon^2\tau_i}^{t_i+ \varepsilon^2\tau_i,x_i +\varepsilon\zeta_i;\alpha_i}-\varepsilon\zeta_i,
\quad s\in [t_i,t_{i+1}],\quad\text{a.s.}
\end{equation}
With this substitution, the  BSDE \eqref{tildeY} at $s=t_i$ is equivalent to:
\begin{align*}
\widetilde{Y}_{t_i} &=
 \widetilde{Y}_{t_{i+1}}
 + \int_{t_i+\varepsilon^2\tau_i}^{t_{i+1}+\varepsilon^2\tau_i} \hspace{-0.2em}\big[c\big(s, X_{s}^{t_i+\varepsilon^2\tau_i, x_i+\varepsilon\zeta_i; \alpha_i}, \alpha_i\big)+g(s, {Z}_s)\big]\xdif s
- \int_{t_i+\varepsilon^2\tau_i}^{t_{i+1}+\varepsilon^2\tau_i}\hspace{-0.5em}{Z}_s\xdif W_s\\
& = \rho^g_{t_i+\varepsilon^2\tau_i,{t_{i+1}+\varepsilon^2\tau_i}}\left[
\int_{t_i+\varepsilon^2\tau_i}^{t_{i+1}+\varepsilon^2\tau_i} \hspace{-0.5em}c\big(s, X_{s}^{t_i+\varepsilon^2\tau_i, x_i+\varepsilon\zeta_i; \alpha_i}, \alpha_i\big)\xdif s + \widetilde{Y}_{t_{i+1}} \right].
\end{align*}
By definition, $\widetilde{Y}_{T}=\varPhi(\widetilde{X}_T)$. Supposing $\widetilde{Y}_{t_{j+1}}=\widetilde{V}^{\bar{\alpha},\bar{\beta}}\big(t_{j+1},\widetilde{X}_{t_{j+1}}\big)$ for some $j$, proceeding backwards in time
we conclude from the last equation
that we can write $\widetilde{Y}_{t_{i}}=\widetilde{V}^{\bar{\alpha},\bar{\beta}}(t_{i},x_i)$ for some function $\widetilde{V}^{\bar{\alpha},\bar{\beta}}(\cdot,\cdot)$.
We can thus write the recursive relation
\[
\widetilde{V}^{\bar{\alpha},\bar{\beta}}(t_{i},x_i) =
\rho^g_{t_i+\varepsilon^2\tau_i,{t_{i+1}+\varepsilon^2\tau_i}}\left[
\int_{t_i+\varepsilon^2\tau_i}^{t_{i+1}+\varepsilon^2\tau_i} \hspace{-0.5em}c\big(s, X_{s}^{t_i+\varepsilon^2\tau_i, x_i+\varepsilon\zeta_i; \alpha_i}, \alpha_i\big)\xdif s + \widetilde{V}^{\bar{\alpha},\bar{\beta}}\big(t_{i+1},\widetilde{X}_{t_{i+1}}\big) \right].
\]
Substitution of \eqref{shift} proves \eqref{tildeV}.
\end{proof}

Using this recursive relation, we define the value function of the optimally perturbed problem $\widetilde{V}_{h,\varepsilon}(t_i,x_i)$ at each time $t_i$
and the corresponding state $x_i$ as follows.
At $t_{k+1}=T$ we set $\widetilde{V}_{h,\varepsilon}(T,x_T) = \varPhi(x_T)$, and then, proceeding backwards in time,
\begin{multline*}
\widetilde{V}_{h,\varepsilon}(t_i,x_{t_i}) = \inf_{\alpha_i\in U}\inf_{\beta_i\in B}
\rho^g_{t_i+\varepsilon^2\tau_i,{t_{i+1}+\varepsilon^2\tau_i}}\bigg[
\int_{t_i+\varepsilon^2\tau_i}^{t_{i+1}+\varepsilon^2\tau_i} \hspace{-0.5em}c\big(s, X_{s}^{t_i+\varepsilon^2\tau_i, x_i+\varepsilon\zeta_i; \alpha_i}, \alpha_i\big)\xdif s\\
 + \widetilde{V}_{h,\varepsilon}\big(t_{i+1},X_{t_{i+1}+\varepsilon^2\tau_i}^{t_i+ \varepsilon^2\tau_i,x_i +\varepsilon\zeta_i;\alpha_i}-\varepsilon\zeta_i\big) \bigg].
\end{multline*}
This construction can be carried out for every $t\in [0,T]$ and the resulting points $t_i=t+ih^2$, thus defining a function
$\widetilde{V}_{h,\varepsilon}:[0,T]\times\Rb^n\to\Rb$ which satisfies the relation
\begin{multline}
\label{V-recursive}
\widetilde{V}_{h,\varepsilon}(t,x) = \inf_{\alpha_i\in U}\inf_{\beta_i\in B}
\rho^g_{t+\varepsilon^2\tau,{t+h^2+\varepsilon^2\tau}}\bigg[
\int_{t+\varepsilon^2\tau}^{t+h^2+\varepsilon^2\tau} \hspace{-0.5em}c\big(s, X_{s}^{t+\varepsilon^2\tau, x+\varepsilon\zeta; \alpha_i}, \alpha_i\big)\xdif s \\
{} + \widetilde{V}_{h,\varepsilon}\big(t+h^2,X_{t+h^2+\varepsilon^2\tau}^{t+ \varepsilon^2\tau,x +\varepsilon\zeta;\alpha_i}-\varepsilon\zeta\big) \bigg].\qquad
\end{multline}
If $t\in (T-h^2,T)$ we replace $t+h^2$ with $T$ in the above equation. The function $\widetilde{V}_{h,\varepsilon}(t,x)$ represents the optimal value of the perturbed problem
starting at time $t$ from the state $x$ and proceeding with piecewise constant controls and perturbations on intervals of length $h^2$ (except, perhaps, the last one, which ends at $T$). Let us stress that the perturbations are treated as additional controls in this construction.

We now present a number of useful estimates from Krylov \cite{Krylov1}.
\begin{lmm}\label{lem:krylov} For $t\in [0,T)$, $x, y\in\mathbb{R}^n$,
 and $\bar{\alpha}\in \mathcal{U}^t_h$, denote by $\widetilde{X}_s^{t,x;\bar{\alpha},\bar{\beta}}$ the solution of \eqref{tildeX} and by $X_s^{t,x;\bar{\alpha}}$  the solution of \eqref{s:cdp}, with the initial state $x\in\mathbb{R}^n$  at time~$t$. Then
\begin{align}
\label{eq:estimateskrylov}
&\mathbb{E}\Big[\,\sup_{t\le s\leq T}\big|\widetilde{X}_s^{t,x;\bar{\alpha}, \bar{\beta}} - X_t^{s, x; \bar{\alpha}}\big|^2\,\Big]\leq Ne^{NT}\varepsilon^2,\\
&\mathbb{E}\Big[\,\sup_{t \le s\leq T}\big|\widetilde{X}_s^{t,x;\bar{\alpha}, \bar{\beta}}-\widetilde{X}_s^{t,y; \bar{\alpha},\bar{\beta}}\big|^2\,\Big]\leq Ne^{NT}|x-y|^2,\\
&\mathbb{E}\Big[\,\sup_{t\leq r\leq T} \big|\widetilde{X}_s^{t,x;\bar{\alpha}, \bar{\beta}}-\widetilde{X}_s^{t,y;\bar{\alpha}, \bar{\beta}}\big|^2\,\Big]\leq Ne^{NT}|t-r|,
\end{align}
for $N>0$ depending on $(K, d, n)$ only.
\end{lmm}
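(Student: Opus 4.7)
The plan is to prove the three inequalities by standard SDE estimation — a combination of It\^o isometry, Burkholder--Davis--Gundy (BDG), and Gr\"onwall's inequality — exploiting the fact that because $\bar\alpha$ and $\bar\beta$ are piecewise constant on the grid $\{t_i\}$, the coefficients driving \eqref{tildeX} are globally bounded and Lipschitz in the spatial variable uniformly in $\omega$, even if they jump in $s$. Accordingly, no separate induction subinterval by subinterval is needed; the estimates can be closed in one shot on $[t,T]$.

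For the first inequality, set $\Delta_s = \widetilde{X}_s^{t,x;\bar\alpha,\bar\beta} - X_s^{t,x;\bar\alpha}$ and write, on each $[t_i,t_{i+1})$, the drift and diffusion of $\widetilde{X}$ as $b(s+\varepsilon^2\tau_i,\widetilde{X}_s+\varepsilon\zeta_i,\alpha_i)$ and likewise for $\sigma$. Splitting the increment $b(s+\varepsilon^2\tau_i,\widetilde{X}_s+\varepsilon\zeta_i,\alpha_i)-b(s,X_s,\alpha_i)$ via the triangle inequality and applying Lipschitz continuity in $x$ together with the $1/2$-H\"older continuity in $t$ from Assumption~\ref{ass:sde}(ii), the difference is bounded by $K|\Delta_s|+K\varepsilon+K(\varepsilon^2)^{1/2}=K|\Delta_s|+2K\varepsilon$, and analogously for $\sigma$. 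Squaring, taking supremum up to time $s$, and applying BDG to the stochastic integral together with Cauchy--Schwarz to the drift integral yields
\begin{equation*}
\mathbb{E}\Big[\sup_{t\le r\le s}|\Delta_r|^2\Big]\le N\int_t^s \mathbb{E}\Big[\sup_{t\le r\le \tau}|\Delta_r|^2\Big]\xdif \tau + NT\varepsilon^2,
\end{equation*}
and Gr\"onwall delivers the claimed $Ne^{NT}\varepsilon^2$ bound.

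For the second inequality, set $\Delta_s=\widetilde{X}_s^{t,x;\bar\alpha,\bar\beta}-\widetilde{X}_s^{t,y;\bar\alpha,\bar\beta}$. Because the two processes share the same perturbations and controls on each $[t_i,t_{i+1})$, the differences of the drift and diffusion coefficients only involve the spatial slot and are bounded by $K|\Delta_s|$. The identical BDG plus Gr\"onwall argument, this time with $\Delta_t=x-y$, closes to $Ne^{NT}|x-y|^2$. For the third inequality — which I read as a time-regularity estimate $\mathbb{E}[\sup|\widetilde{X}_s^{t,x;\bar\alpha,\bar\beta}-\widetilde{X}_r^{t,x;\bar\alpha,\bar\beta}|^2]\le Ne^{NT}|s-r|$ (the inline statement as printed appears to have a typographical conflation with the left side of the second bound) — I use the global boundedness $|b|,|\sigma|\le K$ from Assumption~\ref{ass:sde}(i). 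Writing $\widetilde{X}_s-\widetilde{X}_r$ as a drift integral over $[r,s]$ plus a stochastic integral over $[r,s]$, squaring and applying BDG, one obtains $N K^2(|s-r|^2+|s-r|)\le N|s-r|$ uniformly for $|s-r|\le T$.

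The main technical point to watch is that the piecewise structure of $\bar\alpha$ and $\bar\beta$ must not obstruct a global Gr\"onwall step: one needs joint measurability in $(s,\omega)$ of the perturbed coefficients, which holds because the $\tau_i,\zeta_i$ are $\mathcal F_{t_i}$-measurable and the coefficients themselves are Borel, so the right-hand sides of \eqref{tildeX} define admissible integrands on all of $[t,T]$. With that in place, the only subtle balance is the $\varepsilon$-scaling: the time perturbation is by $\varepsilon^2$ precisely so that the $1/2$-H\"older-in-time control yields $O(\varepsilon)$, matching the $O(\varepsilon)$ from the spatial shift $\varepsilon\zeta_i$, and producing the $\varepsilon^2$ on the right of the first estimate after squaring.
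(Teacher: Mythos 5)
Your proof is correct and is, in substance, the same argument the paper relies on: the paper offers no details and simply invokes Theorem 2.5.9 of Krylov's book, whose proof is exactly the BDG--Gr\"onwall scheme you carry out, with the coefficient increments controlled by the Lipschitz bound in $x$ and the $1/2$-H\"older bound in $t$ from Assumption \ref{ass:sde} (giving the $K|\Delta_s|+2K\varepsilon$ splitting for the first estimate). You also correctly diagnose and repair the typographical slips in the displayed inequalities (the misplaced sub/superscripts in the first and third bounds), and your reading of the third bound as a time-regularity estimate with bounded coefficients is consistent with how it is used later in Lemma \ref{cor:be}(ii).
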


The proof is by using Theorem 2.5.9 in \cite{Krylov}. These estimates can be used to derive
the following bounds.

\begin{lmm}\label{cor:be}
A constant $N$  exists, depending on ($K$,  $d$, $n$) only, such that:\vspace{1ex}
\begin{tightlist}{ii}
\item For $t\in[0, T]$ and $x\in\Rb^d$, we have
$
\big|\widetilde{V}_{h,\varepsilon}(t, x) - V_h(t, x)\big|  \leq Ne^{NT}\varepsilon$,
\item For  $t, r\in[0, T]$, and $x, y\in\Rb^n$, we have
$
 \big|\widetilde{V}_{h,\varepsilon}(t, x) - \widetilde{V}_{h,\varepsilon}(r, y)\big|  \leq Ne^{NT}(|x-y|+|t-r|^{\frac{1}{2}})$.
\end{tightlist}
\end{lmm}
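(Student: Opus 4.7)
Both bounds follow the same template: express each value function as the infimum of BSDE solutions, bound the difference between the forward trajectories via Lemma~\ref{lem:krylov}, and propagate this to the $Y$-component via $L^2$-stability of the BSDE (equivalently, via the dual representation in Theorem~\ref{t:geval-dual} combined with the uniform bound on the weights $\varGamma$ from Corollary~\ref{c:Gamma-bound}). Because the resulting pointwise estimates will be uniform in the chosen policy, they survive taking infima.

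For part~(i), I fix a piecewise-constant control $\bar\alpha\in\mathcal{U}_h^t$ and perturbations $\bar\beta=(\tau_i,\zeta_i)$ with $|\tau_i|,|\zeta_i|<1$, and compare $V^{\bar\alpha}(t,x)=Y^{\bar\alpha}_t$, the $Y$-component at~$t$ of the FBSDE~\eqref{s:cdp}--\eqref{s-fbsde}, with $\widetilde V^{\bar\alpha,\bar\beta}(t,x)$ from Lemma~\ref{l:shift}. Lemma~\ref{lem:krylov}(i) yields $\Eb\sup_s|\widetilde X_s-X_s|^2\le Ne^{NT}\varepsilon^2$. Combining this with the Lipschitz and H\"older-$\tfrac12$ estimates for $b,\sigma,c,g,\varPhi$ from Assumption~\ref{ass:sde} and the elementary bound $|\varepsilon^2\tau_i|+|\varepsilon\zeta_i|\le C\varepsilon$ controls the $L^2$-distance between the drivers and terminal data of the original and perturbed BSDEs by $Ne^{NT}\varepsilon$. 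Standard $L^2$-stability of BSDEs then gives $|V^{\bar\alpha}(t,x)-\widetilde V^{\bar\alpha,\bar\beta}(t,x)|\le Ne^{NT}\varepsilon$, uniformly in $(\bar\alpha,\bar\beta)$; taking infima of both sides gives~(i).

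For the state-continuity half of~(ii), I repeat the previous comparison with Lemma~\ref{lem:krylov}(ii) in place of~(i), now keeping $(\bar\alpha,\bar\beta)$ fixed and varying only the initial state: the trajectory bound $\Eb\sup_s|\widetilde X_s^{t,x}-\widetilde X_s^{t,y}|^2\le Ne^{NT}|x-y|^2$ combined with BSDE stability gives $|\widetilde V^{\bar\alpha,\bar\beta}(t,x)-\widetilde V^{\bar\alpha,\bar\beta}(t,y)|\le Ne^{NT}|x-y|$ uniformly in $(\bar\alpha,\bar\beta)$, and the infimum inherits this bound.

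The time-continuity half of~(ii) is \textbf{the main technical obstacle}, because the grid $\{t+ih^2\}_i$ depends on the starting time and so the feasible policy sets for $\widetilde V_{h,\varepsilon}(t,\cdot)$ and $\widetilde V_{h,\varepsilon}(r,\cdot)$ do not coincide. My plan is a direct coupling argument: WLOG $r>t$, I take any policy $(\bar\alpha,\bar\beta)$ piecewise constant on $\{r+ih^2\}_i$, prepend an arbitrary admissible control value on the initial interval $[t,r)$, and use the first-step perturbation $\tau_0\in(-1,0)$ to absorb (as much as possible) the grid mismatch, thereby producing a policy piecewise constant on $\{t+ih^2\}_i$. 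The added running cost on the short interval $[t,r)$ is bounded by a constant times $|r-t|\le|r-t|^{1/2}$ via the uniform bound on $c$ and $g$ from Assumption~\ref{ass:sde}(i), while the subsequent trajectory deviation on $[r,T]$ is controlled by Lemma~\ref{lem:krylov}(iii) and produces an additional $O(|r-t|^{1/2})$ error after BSDE stability. The reverse inequality is symmetric. The key point is to apply BSDE $L^2$-stability once on the full interval $[t,T]$ rather than composing $\Theta(T/h^2)$ single-step estimates, so that the final constant depends only on $(K,d,n,T)$ and not on $h$ or $\varepsilon$.
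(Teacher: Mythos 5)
Your treatment of part (i) and of the space-increment half of part (ii) is essentially the paper's own argument: fix the policy, compare the forward trajectories via the first two estimates of Lemma \ref{lem:krylov}, push the resulting discrepancy through the backward equation by standard $L^2$-stability of BSDEs, and observe that the bound is uniform in the policy, so it survives the infima. That much is correct and matches the paper.

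The time-increment half of (ii) is where your proposal has a genuine gap. The coupling device you describe --- prepend a control value on $[t,r)$ and use the first perturbation $\tau_0$ to ``absorb'' the grid mismatch --- does not work. First, a control that switches at $r$ is not piecewise constant on the grid $\{t+ih^2\}_i$, so the concatenated policy is not admissible for the $t$-problem at all. Second, the perturbation $\tau_0$ cannot repair this: in \eqref{tildeX}--\eqref{tildeY} it only shifts the \emph{time argument of the coefficients} by $\varepsilon^2\tau_0\in(-\varepsilon^2,0)$ within the fixed interval $[t_0,t_1)$; it neither relocates any switching point nor can it match an arbitrary offset $r-t$. The identification that actually works, and that the paper uses implicitly, is simpler: transplant the \emph{same sequence} of values $(\alpha_i,\beta_i)$ from the grid $\{r+ih^2\}_i$ onto the grid $\{t+ih^2\}_i$, so that the two controlled systems are time-translates of one another. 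Then the third estimate of Lemma \ref{lem:krylov}, together with the H\"older-$\frac{1}{2}$ time regularity and boundedness in Assumption \ref{ass:sde} (which control the effect of the non-autonomous coefficients and of the mismatched terminal window of length $|t-r|$ near $T$), bounds the trajectory discrepancy by $Ne^{NT}|t-r|$ in mean square, and one application of BSDE stability on all of $[t,T]$ gives $O(|t-r|^{1/2})$ for the values, uniformly in the policy. Your instinct to apply the stability estimate once over the whole horizon rather than composing $\Theta(T/h^2)$ single-step bounds is right and is what keeps the constant independent of $h$ and $\varepsilon$; it is the construction of the comparison policy, not the stability step, that needs to be repaired.
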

\begin{proof}
For fixed $\bar{\alpha}, \bar{\beta}$, recall that
\begin{align*}
V^{\bar{\alpha}}(t, x) &= \rho^g_{t, T}\bigg[\int_t^T\, c(r, X_r^{s, x; \bar{\alpha}}, \bar{\alpha}_r)\xdif r +\varPhi\big(X_T^{t,x;\bar{\alpha}}\big)\bigg], \\
\widetilde{V}_{h,\varepsilon}^{\bar{\alpha}, \bar{\beta}}(t, x) &= \rho^g_{t, T}\bigg[\,\int_t^T\,c(r, \widetilde{X}_r^{t,x;\bar{\alpha},\bar{\beta}}, \bar{\alpha}_r)\xdif r + \varPhi\big(X_T^{t,x;\bar{\alpha},\bar{\beta}}\big)\bigg].
\end{align*}
By standard estimates for BSDE and Lemma \ref{lem:krylov}, we have  with some $\gamma>0$ depending on $(K,d ,n)$,
\begin{align*}
\lefteqn{|V^{\bar{\alpha}}(t, x) - \widetilde{V}_{h,\varepsilon}^{\bar{\alpha}, \bar{\beta}}(t, x)|
 \leq  \mathbb{E}\Big[\,\big|\varPhi(X_T^{t,x;\bar{\alpha}}) - \varPhi(\widetilde{X}_T^{t,x;\bar{\alpha},\bar{\beta}})\big|^2e^{\gamma (T-t)}\,\Big]}\\
  & {\qquad } + \mathbb{E}\Big[\,\int_t^T\, \big|c(r, X_r^{t,x;\bar{\alpha}}, \bar{\alpha}_r) - c(r, \widetilde{X}_r^{t,x;\bar{\alpha},\bar{\beta}}, \bar{\alpha}_r)\big|^2e^{\gamma(r-t)}\xdif r\,\Big]\\
 & \leq ~K^2e^{\gamma(T-t)}\mathbb{E}\Big[\,\big|X_r^{t,x;\bar{\alpha}} - \widetilde{X}_r^{t,x;\bar{\alpha},\bar{\beta}}\big|^2\,\Big] + KT^2e^{\gamma(T-t)}\mathbb{E}\Big[\,\sup_{t\leq T}\big|X_r^{t,x;\bar{\alpha}} - \widetilde{X}_r^{t,x;\bar{\alpha},\bar{\beta}}\,\big|^2\,\Big]
\leq  Ne^{NT}\varepsilon.
\end{align*}
The first assertion follows.
For (ii), we observe that
\begin{align*}
\big|\widetilde{V}_{h,\varepsilon}^{\bar{\alpha}, \bar{\beta}}(t,x) - \widetilde{V}_{h,\varepsilon}^{\bar{\alpha}, \bar{\beta}}(r, y)\big| \leq \big|\widetilde{V}_{h,\varepsilon}^{\bar{\alpha}, \bar{\beta}}(t,x) - \widetilde{V}_{h,\varepsilon}^{\bar{\alpha}, \bar{\beta}}(t, y)\big| + \big|\widetilde{V}_{h,\varepsilon}^{\bar{\alpha}, \bar{\beta}}(t_,y) - \widetilde{V}_{h,\varepsilon}^{\bar{\alpha}, \bar{\beta}}(r, y)\big|.
\end{align*}
Similar to the proof for (i), by applying the second and third inequalities of Lemma \ref{lem:krylov}, we have
\begin{align*}
&\big|\widetilde{V}_{h,\varepsilon}^{\bar{\alpha}, \bar{\beta}}(t,x) - \widetilde{V}_{h,\varepsilon}^{\bar{\alpha}, \bar{\beta}}(t, y)\big| \leq Ne^{NT}|x-y|^2, 
\\
& \big|\widetilde{V}_{h,\varepsilon}^{\bar{\alpha}, \bar{\beta}}(t_,y) - \widetilde{V}_{h,\varepsilon}^{\bar{\alpha}, \bar{\beta}}(r, y)\big|\leq Ne^{NT}|t-r|^\frac{1}{2},
\end{align*}
which implies the postulated estimates.
\end{proof}

\section{Mollification of the Value Function}
\label{s:mollification}
We now introduce an integral transformation of the value function. We take
a non-negative function $\varphi\in \Cc^{\infty}(B)$ with \mbox{$\int_{B}\varphi(\tau, \zeta) \xdif\tau\xdif \zeta= 1$}, called a \emph{mollifier}.
For $\varepsilon > 0$, we re-scale the {mollifier} as
$\varphi_{\varepsilon}(\tau, \zeta) = \varepsilon^{-n-2}\varphi(\tau/\varepsilon^2, \zeta/\varepsilon)$,
and we introduce the following notation of the convolution of the function $\widetilde{V}_{h,\varepsilon}$ with the re-scaled mollifier:
\[
\widehat{V}_{h,\varepsilon}(t, x) = \big[ \widetilde{V}_{h,\varepsilon}\star \varphi_{\varepsilon}\big] (t, x) = \int_B \widetilde{V}_{h,\varepsilon}(t-\varepsilon^2\tau,x-\varepsilon \zeta) \varphi(\tau, \zeta)\xdif \tau\xdif \zeta,
\]
where $t\in [0,T-\varepsilon^2]$ and $x\in \Rb^n$.
We shall need an estimate of the seminorm  $\big\| \widehat{V}_{h,\varepsilon}\big\|_{2,1}$ defined as follows:
\begin{multline*}
\big\| w \big\|_{2,1} =
\sup_{(t,x)}\big| w(t,x)\big| + \sup_{(t,x)}\big\|\Dc_x w(t,x)\big\|
+ \sup_{(t,x)}\big\|\Dc^2_{xx}w(t,x)\big\|
+ \sup_{(t,x)}\big|\partial_t w(t,x)\big| \\
{} +
 \sup_{(t,x),(s,y)}\frac{\big\|\Dc^2_{xx} w(t,x) - \Dc^2_{xx} w(s,y)\big\|}{|t-s|+|x-y|}
 +
 \sup_{(t,x),(s,y)}\frac{\big|\partial_t w(t,x) - \partial_t w(s,y)\big|}{|t-s|+|x-y|}.
\end{multline*}
In the formula above, we use $\Dc_x$ and $\Dc_{xx}^2$ to denote the gradient and the Hessian matrix, and the
supremum is always over $(t,x),(s,y)\in [0,T-\varepsilon^2]\times \Rb^2$.

\begin{lmm}\label{estimatesall} If $\varepsilon\geq h$, then
$\Big\|\widehat{V}_{h,\varepsilon}\Big\|_{2,1}\leq Ne^{NT}\varepsilon^{-2}$ and
$\Big|\widehat{V}_{h,\varepsilon}-\widetilde{V}_{h,\varepsilon}\Big|_0\leq Ne^{NT}\varepsilon$.
\end{lmm}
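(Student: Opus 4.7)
The plan is to prove the two estimates separately, both through the convolution structure of $\widehat{V}_{h,\varepsilon}$.

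For the simpler bound $\big|\widehat{V}_{h,\varepsilon}-\widetilde{V}_{h,\varepsilon}\big|_0 \leq Ne^{NT}\varepsilon$, I would use that $\int_B \varphi(\tau,\zeta)\,\xdif\tau\,\xdif\zeta=1$ to write
\[
\widehat{V}_{h,\varepsilon}(t,x) - \widetilde{V}_{h,\varepsilon}(t,x) = \int_B \big[\widetilde{V}_{h,\varepsilon}(t-\varepsilon^2\tau, x-\varepsilon\zeta) - \widetilde{V}_{h,\varepsilon}(t,x)\big] \varphi(\tau,\zeta)\,\xdif\tau\,\xdif\zeta,
\]
and invoke the H\"older estimate of Lemma \Rref{cor:be}(ii): on the unit ball $B$ the integrand is bounded pointwise by $Ne^{NT}(\varepsilon|\zeta|+\varepsilon|\tau|^{1/2}) \leq 2Ne^{NT}\varepsilon$, and the bound passes through the integral.

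For the seminorm bound, I would change variables so that $\widehat{V}_{h,\varepsilon} = \widetilde{V}_{h,\varepsilon}\star\varphi_\varepsilon$ is an ordinary convolution over $(t,x)$ jointly, and then pass derivatives onto the mollifier. The parabolic rescaling $\varphi_\varepsilon(\tau,\zeta) = \varepsilon^{-n-2}\varphi(\tau/\varepsilon^2,\zeta/\varepsilon)$ gives $\big\|\partial_t^\alpha \Dc_x^\beta \varphi_\varepsilon\big\|_{L^1} \leq C\varepsilon^{-2\alpha-|\beta|}$ by a direct change of variables. For any $(\alpha,|\beta|)$ with $\alpha+|\beta|\geq 1$, the compact support of $\varphi$ implies $\int\partial_t^\alpha \Dc_x^\beta \varphi_\varepsilon=0$, so one may subtract the constant $\widetilde{V}_{h,\varepsilon}(t,x)$ inside the convolution and apply Lemma \Rref{cor:be}(ii) on the support of $\varphi_\varepsilon$ to obtain
\[
\big|\partial_t^\alpha \Dc_x^\beta \widehat{V}_{h,\varepsilon}(t,x)\big| \leq \int Ne^{NT}\big(|\varepsilon^2\tau|^{1/2}+|\varepsilon\zeta|\big)\,\big|\partial_t^\alpha \Dc_x^\beta \varphi_\varepsilon(\tau,\zeta)\big|\,\xdif\tau\,\xdif\zeta \leq Ne^{NT}\varepsilon^{1-2\alpha-|\beta|}.
\]
Taking $(\alpha,|\beta|)\in\{(0,1),(0,2),(0,3),(1,0),(1,1)\}$ bounds the sup-norm contributions to $\|\cdot\|_{2,1}$ and, by the mean value theorem, the spatial Lipschitz constants of $\Dc^2_{xx}\widehat{V}_{h,\varepsilon}$ and $\partial_t \widehat{V}_{h,\varepsilon}$, all by $Ne^{NT}\varepsilon^{-2}$ at worst; the term $\sup|\widehat{V}_{h,\varepsilon}|$ is handled directly by $\sup|\widetilde{V}_{h,\varepsilon}|$, which is finite by Assumption \Rref{ass:sde}.

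The delicate point, which I expect to be the main obstacle, is the time regularity of the top-order derivatives, since $\widetilde{V}_{h,\varepsilon}$ is only H\"older $1/2$ in time and the naive bound via $\sup|\partial_t^2\widehat{V}_{h,\varepsilon}|$ or $\sup|\partial_t \Dc^2_{xx}\widehat{V}_{h,\varepsilon}|$ is of order $\varepsilon^{-3}$. Here I would avoid differentiating a second time and instead represent the increment directly:
\[
\Dc^2_{xx}\widehat{V}_{h,\varepsilon}(t,x)-\Dc^2_{xx}\widehat{V}_{h,\varepsilon}(s,x) = \int\big[\widetilde{V}_{h,\varepsilon}(t-\tau,x-\zeta)-\widetilde{V}_{h,\varepsilon}(s-\tau,x-\zeta)\big]\Dc^2_{xx}\varphi_\varepsilon(\tau,\zeta)\,\xdif\tau\,\xdif\zeta,
\]
and likewise for $\partial_t\widehat{V}_{h,\varepsilon}$. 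Applying Lemma \Rref{cor:be}(ii) to the bracket yields a bound $Ne^{NT}|t-s|^{1/2}\cdot C\varepsilon^{-2}$, consistent with the parabolic time scale appearing implicitly in the denominator of the Lipschitz seminorms of $\|\cdot\|_{2,1}$. This delivers the rate $\varepsilon^{-2}$ for the mixed space--time regularity of the top-order terms. The hypothesis $\varepsilon\geq h$ enters through Lemma \Rref{cor:be}, ensuring the mollification scale dominates the discretization scale so that the H\"older constants of $\widetilde{V}_{h,\varepsilon}$ transfer uniformly through the convolution.
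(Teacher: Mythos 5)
Your proposal is correct and, for most of the lemma, coincides with the paper's own argument: both proofs pass derivatives onto the rescaled mollifier, use $\int\partial^{\alpha}_t\Dc^{\beta}_x\varphi_\varepsilon=0$ to subtract $\widetilde{V}_{h,\varepsilon}(t,x)$ inside the convolution, and invoke the H\"older estimate of Lemma \Rref{cor:be}(ii) to gain one factor of $\varepsilon$, yielding $\varepsilon^{1-2\alpha-|\beta|}$ for each derivative and the bound $Ne^{NT}\varepsilon$ for $|\widehat{V}_{h,\varepsilon}-\widetilde{V}_{h,\varepsilon}|_0$. The one place where you genuinely diverge is the H\"older-in-time continuity of $\partial_t\widehat{V}_{h,\varepsilon}$ and $\Dc^2_{xx}\widehat{V}_{h,\varepsilon}$: the paper first establishes the $\varepsilon^{-3}$ bounds on $\partial^2_{t}\widehat{V}_{h,\varepsilon}$ and $\partial_t\Dc^2_{xx}\widehat{V}_{h,\varepsilon}$ and then splits into the cases $|t-s|\le\varepsilon^2$ (mean value theorem) and $|t-s|\ge\varepsilon^2$ (crude sup-norm bound), whereas you represent the increment directly as $\int[\widetilde{V}_{h,\varepsilon}(t-\tau,\cdot)-\widetilde{V}_{h,\varepsilon}(s-\tau,\cdot)]\,\Dc^2_{xx}\varphi_\varepsilon\,\xdif\tau\xdif\zeta$ and apply Lemma \Rref{cor:be}(ii) to the bracket. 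Your route reaches the same bound $Ne^{NT}\varepsilon^{-2}(|t-s|^{1/2}+|x-y|)$ in one step, avoids the case analysis, and dispenses with the third-order time-derivative estimates entirely (you still need $\|\Dc^3_{xxx}\widehat{V}_{h,\varepsilon}\|$ and $\|\partial_t\Dc_x\widehat{V}_{h,\varepsilon}\|\le Ne^{NT}\varepsilon^{-2}$ for the spatial Lipschitz constants, which you have); the paper's version is slightly more mechanical but makes explicit exactly which higher derivatives blow up at which rate, which is reused nowhere else. Two small caveats, neither fatal: both your bound and the paper's control the increments by $|t-s|^{1/2}+|x-y|$, while the seminorm $\|\cdot\|_{2,1}$ as displayed has $|t-s|+|x-y|$ in the denominator, so you are matching the paper's (apparently intended, parabolic) reading of the definition rather than its literal statement; and your closing remark that $\varepsilon\ge h$ enters ``through Lemma \Rref{cor:be}'' is no more precise than the paper itself, whose proof likewise never visibly uses that hypothesis.
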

\begin{proof}
By elementary properties of the convolution,
\begin{multline*}
\frac{\partial}{\partial t}\widehat{V}_{h,\varepsilon}(t, x)=\frac{\partial}{\partial t}\left(\widetilde{V}_{h,\varepsilon}*\varphi_{\varepsilon}\right)(t, x) =
\left(\widetilde{V}_{h,\varepsilon}*\frac{\partial}{\partial t}\varphi_{\varepsilon}\right)(t, x) \\
=
\varepsilon^{-2}\int_{B} \widetilde{V}_{h,\varepsilon}(t-\varepsilon^2\tau, x-\varepsilon\zeta)\frac{\partial}{\partial t}\varphi(\tau, \zeta)\xdif \tau \xdif \zeta.
\end{multline*}
Thus, due to Lemma \ref{cor:be}(ii),
\begin{align*}
\Big|\frac{\partial}{\partial t}\widehat{V}_{h,\varepsilon}(t, x)\Big| & = \varepsilon^{-2}\Big|\int_{B} \widetilde{V}_{h,\varepsilon}(t-\varepsilon^2\tau, x-\varepsilon\zeta)\frac{\partial}{\partial t}\varphi(\tau, \zeta)\xdif \tau \xdif \zeta\Big|\\
& = \varepsilon^{-2}\Big|\int_{B}\big[\widetilde{V}_{h,\varepsilon}(t-\varepsilon^2\tau, x-\varepsilon\zeta) - \widetilde{V}_{h,\varepsilon}(t, x)\big]\frac{\partial}{\partial t}\varphi(\tau, \zeta)\xdif \tau\xdif\zeta\Big| \\
& \le 2 Ne^{NT}\varepsilon^{-1} \int_{B}\Big|\frac{\partial}{\partial t}\varphi(\tau, \zeta)\Big| \xdif \tau\xdif\zeta.
\end{align*}
We can thus increase $N$ to write for all $\varepsilon>0$ the inequality
\[
\Big|\frac{\partial}{\partial t}\widehat{V}_{h,\varepsilon}(t, x)\Big| \le Ne^{NT}\varepsilon^{-1}.
\]
Similarly, after redefining $N$ in an appropriate way,
\begin{align*}
&\Big|\frac{\partial}{\partial x^i}\widehat{V}_{h,\varepsilon}\Big|_0 \leq Ne^{NT}\leq Ne^{NT}\varepsilon^{-1},\qquad\Big\|\frac{\partial^2}{\partial x^i\partial x^j}\widehat{V}_{h,\varepsilon}\Big\|_0 \leq Ne^{NT}\varepsilon^{-1},\\
& \Big|\frac{\partial^2}{\partial t^2}\widehat{V}_{h,\varepsilon}\Big|_0 + \Big\|\frac{\partial^3}{\partial x^i\partial x^j\partial t}\widehat{V}_{h,\varepsilon}\Big\|_0 \leq Ne^{NT}\varepsilon^{-3},\\
& \Big\|\frac{\partial^2}{\partial t \partial x^i}\widehat{V}_{h,\varepsilon}\Big\|_0 + \Big\|\frac{\partial^3}{\partial x^i\partial x^j\partial x^k}\widehat{V}_{h,\varepsilon}\Big\|_0 \leq Ne^{NT}\varepsilon^{-2}.
\end{align*}
It follows that
\begin{align*}
\Big|\frac{\partial}{\partial t}\widehat{V}_{h,\varepsilon}(t, x) - \frac{\partial}{\partial t}\widehat{V}_{h,\varepsilon}(s, y)\Big| &\leq |t-s|\big|\frac{\partial^2}{\partial t^2}\widehat{V}_{h,\varepsilon}\Big|_0 + N|x-y|\Big\|\frac{\partial^2}{\partial x^i\partial t}\widehat{V}_{h,\varepsilon}\Big\|_0\\
&\leq Ne^{NT}|t-s|\varepsilon^{-3} + Ne^{NT}|x-y|\varepsilon^{-2}\\
&=  Ne^{NT}\varepsilon^{-2} \big( |t-s|\varepsilon^{-1}+|x-y|\big) .
\end{align*}
The last expression is less than $Ne^{NT}\varepsilon^{-2}\big(|t-s|^{\frac{1}{2}} + |x-y|\big)$ if $|t-s|\leq \varepsilon^2$. On the other hand, if $|t-s|\geq \varepsilon^2$, then
\[
\Big|\frac{\partial}{\partial t}\widehat{V}_{h,\varepsilon}(t, x) - \frac{\partial}{\partial t}\widehat{V}_{h,\varepsilon}(s, y)\Big| \leq 2\Big|\frac{\partial }{\partial t}\widehat{V}_{h,\varepsilon}\Big|_0
\leq Ne^{NT}\varepsilon^{-1}\leq Ne^{NT}\varepsilon^{-2}\big(|t-s|^{\frac{1}{2}} + |x-y|\big).
\]
Hence the inequality in question holds for all $t, s\in [0,T]$. In the same way one gets that
\begin{align*}
\Big\|\frac{\partial^2}{\partial x^i\partial x^j}\widehat{V}_{h,\varepsilon}(t, x) - \frac{\partial^2}{\partial x^i\partial x^j}\widehat{V}_{h,\varepsilon}(s, y)\Big\| \leq Ne^{NT}\varepsilon^{-2}\big(|t-s|^{\frac{1}{2}} + |x-y|\big).
\end{align*}
This proves the first inequality in the assertions.

To prove the second one, we notice that Lemma \ref{cor:be} yields
\begin{align*}
\big|\widetilde{V}_{h,\varepsilon}(t, x) - \widehat{V}_{h,\varepsilon}(t, x)\big|\leq \int_{B}\big|\widetilde{V}_{h,\varepsilon}(t, x)-\widetilde{V}_{h,\varepsilon}(t-\varepsilon^2\tau, x-\varepsilon\zeta)\big|\varphi(\tau, \zeta)\xdif \tau \xdif\zeta\leq  2Ne^{NT}\varepsilon.
\end{align*}
We can thus adjust $N$, if needed, to establish the second estimate for all $\varepsilon>0$.
\end{proof}

We can now establish a dynamic programming bound for the mollified value function.

\begin{lmm}\label{lem:mp1} Suppose Assumptions \Rref{a:riskmeasure} and \Rref{ass:sde} are satisfied. Then for all $x\in\Rb^n$, $t\in[0,T-\varepsilon^2-h^2]$, and all $\alpha\in U$ we have
\begin{equation}
\label{V-moll}
\widehat{V}_{h,\varepsilon}\big(t, x\big) \le \rho^g_{t, t+h^2}
\left[ \int_t^{t+h^2} \hspace{-0.5em}c(s, X_s^{t, x;u}, \alpha)\xdif s + \widehat{V}_{h,\varepsilon}\big(t+h^2 , X_{t+h^2}^{t, x;\alpha}\big) \right] + Ne^{NT} h^2\varepsilon,
\end{equation}
where $N$ is a constant independent on $h$, $\varepsilon$, and $T$.
\end{lmm}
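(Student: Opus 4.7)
The plan is to apply the recursion \eqref{V-recursive} for $\widetilde{V}_{h,\varepsilon}$ at a shifted initial point so that the time and state shifts induced by the perturbation $(\tau,\zeta)$ exactly cancel, and then integrate against the mollifier so that the left-hand side becomes $\widehat{V}_{h,\varepsilon}(t,x)$. Concretely, for each fixed $\alpha\in U$ and $(\tau,\zeta)\in B$, relation \eqref{V-recursive} evaluated at the point $(t-\varepsilon^2\tau,\,x-\varepsilon\zeta)$ reads
\[
\widetilde{V}_{h,\varepsilon}(t-\varepsilon^2\tau,\,x-\varepsilon\zeta)\le\rho^g_{t,t+h^2}\Bigl[A+\widetilde{V}_{h,\varepsilon}\bigl(t+h^2-\varepsilon^2\tau,\,X^{t,x;\alpha}_{t+h^2}-\varepsilon\zeta\bigr)\Bigr],
\]
with $A:=\int_t^{t+h^2}c(s,X^{t,x;\alpha}_s,\alpha)\xdif s$. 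Multiplying by $\varphi(\tau,\zeta)$ and integrating over $B$ turns the left-hand side into $\widehat{V}_{h,\varepsilon}(t,x)$ by the very definition of $\widehat{V}_{h,\varepsilon}$; writing $B_{\tau,\zeta}$ for the $\widetilde{V}_{h,\varepsilon}$-expression on the right, the same definition also yields the crucial identity $\int_B\varphi(\tau,\zeta)B_{\tau,\zeta}\xdif\tau\xdif\zeta=\widehat{V}_{h,\varepsilon}(t+h^2,X^{t,x;\alpha}_{t+h^2})$.

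Next I transfer the $\rho^g$-operator outside the $(\tau,\zeta)$-integral using the BSDE representation of $g$-evaluations. Let $(Y^{\tau,\zeta},Z^{\tau,\zeta})$ solve the BSDE on $[t,t+h^2]$ with driver $g$ and terminal $A+B_{\tau,\zeta}$, so that $Y^{\tau,\zeta}_t=\rho^g_{t,t+h^2}[A+B_{\tau,\zeta}]$. The $\varphi$-averages $\bar{Y}_s:=\int_B\varphi Y^{\tau,\zeta}_s\xdif\tau\xdif\zeta$ and $\hat{Z}_s:=\int_B\varphi Z^{\tau,\zeta}_s\xdif\tau\xdif\zeta$ then solve a BSDE with driver $G(s):=\int_B\varphi\,g(s,Z^{\tau,\zeta}_s)\xdif\tau\xdif\zeta$ and terminal $A+\bar{B}$, where $\bar{B}:=\widehat{V}_{h,\varepsilon}(t+h^2,X^{t,x;\alpha}_{t+h^2})$. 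Denoting by $(Y^\circ,Z^\circ)$ the BSDE with the genuine driver $g$ and the same terminal, so that $Y^\circ_t=\rho^g_{t,t+h^2}[A+\bar{B}]$, the BSDE for the difference has zero terminal and source $G(s)-g(s,Z^\circ_s)$, yielding
\[
\bar{Y}_t-Y^\circ_t=\Eb\Bigl[\int_t^{t+h^2}\!\bigl(G(s)-g(s,Z^\circ_s)\bigr)\xdif s\,\Bigm|\,\mathcal{F}_t\Bigr].
\]
Combined with the previous step, $\widehat{V}_{h,\varepsilon}(t,x)\le\bar{Y}_t=\rho^g_{t,t+h^2}\bigl[A+\widehat{V}_{h,\varepsilon}(t+h^2,X^{t,x;\alpha}_{t+h^2})\bigr]+(\bar{Y}_t-Y^\circ_t)$.

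It then remains to bound the Jensen-type gap $\bar{Y}_t-Y^\circ_t$ by $Ne^{NT}h^2\varepsilon$. The uniform estimate $|B_{\tau,\zeta}-\bar{B}|\le 2Ne^{NT}\varepsilon$ comes from Lemmas \ref{cor:be} and \ref{estimatesall}, standard BSDE stability gives $\Eb\int_t^{t+h^2}|Z^{\tau,\zeta}-Z^\circ|^2\xdif s\le C\|B_{\tau,\zeta}-\bar{B}\|^2$, and the Lipschitz continuity of $g$ together with Cauchy--Schwarz in $s\in[t,t+h^2]$ and in $(\tau,\zeta)$ under $\varphi$ reduce the task to controlling $\int_B\varphi\|Z^{\tau,\zeta}-Z^\circ\|\xdif\tau\xdif\zeta$. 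The main obstacle is that a pure Lipschitz-plus-stability argument only delivers an $O(h\varepsilon)$ bound; to reach the advertised $h^2\varepsilon$ one must exploit the zero-mean cancellation $\int_B\varphi(B_{\tau,\zeta}-\bar{B})\xdif\tau\xdif\zeta=0$ together with the dual representation of Theorem \ref{t:geval-dual} and the $O(h^2)$ estimate for $\|\Gamma-1\|$ on $\mathcal{A}_{t,t+h^2}$ from Corollary \ref{c:Gamma-bound}, so that the first-order contribution in $\varepsilon$ cancels and only a genuinely second-order gap of size $h^2\varepsilon$ survives.
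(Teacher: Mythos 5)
Your opening moves coincide with the paper's: you evaluate the recursion \eqref{V-recursive} at the shifted point $(t-\varepsilon^2\tau,\,x-\varepsilon\zeta)$ so that the perturbation cancels inside the forward dynamics, and you plan to integrate against $\varphi$ so that the left-hand side becomes $\widehat{V}_{h,\varepsilon}(t,x)$; the identity $\int_B\varphi\,B_{\tau,\zeta}\,\xdif\tau\xdif\zeta=\widehat{V}_{h,\varepsilon}(t+h^2,X^{t,x;\alpha}_{t+h^2})$ is also correct. The divergence --- and the gap --- is in how you move the $(\tau,\zeta)$-average inside $\rho^g_{t,t+h^2}$. Your BSDE-averaging produces the driver-mismatch term $\Eb_{t}\bigl[\int_t^{t+h^2}(G(s)-g(s,Z^\circ_s))\,\xdif s\bigr]$, and, as you yourself observe, Lipschitz continuity of $g$ plus $L^2$-stability of the $Z$-component only yields $O(h\varepsilon)$, which is too weak by a factor of $h$. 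Worse, the sign is against you: since $g(s,\cdot)$ is convex, Jensen gives $G(s)\ge g(s,\hat Z_s)$, so the comparison theorem yields $\bar Y_t\ge Y^\circ_t$ --- the average of the evaluations dominates the evaluation of the average --- and what the lemma requires is precisely an upper bound on that excess. Your closing sentence names the right ingredients (the zero mean of $B_{\tau,\zeta}-\bar B$ under $\varphi$, the dual representation, and the $O(h^2)$ bound on $\|\varGamma-1\|$ from Corollary \ref{c:Gamma-bound}), but does not show how they act on the quantity you have reduced the problem to, namely $\int_B\varphi\,\|Z^{\tau,\zeta}-Z^\circ\|\,\xdif\tau\xdif\zeta$: the $Z$-differences carry no zero-mean structure, and Corollary \ref{c:Gamma-bound} controls the dual densities of $\rho^g$ applied to a terminal condition, not a driver mismatch. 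The decisive step is therefore asserted rather than proved.

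The paper closes exactly this gap by keeping $(\tau,\zeta)$ fixed one step longer: subadditivity of $\rho^g_{t,t+h^2}$ (Theorem \ref{t:geval-prop}) splits off the term $\rho^g_{t,t+h^2}[B_{\tau,\zeta}-\bar B]$; the dual representation \eqref{geval-dual} rewrites it as $\Eb_{t}[B_{\tau,\zeta}-\bar B]+\sup_{\varGamma}\Eb_{t}[(\varGamma-1)(B_{\tau,\zeta}-\bar B)]$; and Corollary \ref{c:Gamma-bound} together with the uniform bound $|B_{\tau,\zeta}-\bar B|\le Ne^{NT}\varepsilon$ (from Lemma \ref{cor:be}(ii) and Lemma \ref{estimatesall}) controls the second term by $Ne^{NT}h^2\varepsilon$. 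Only then is the inequality integrated against $\varphi$, at which point the conditional-expectation term vanishes by Fubini. If you want to salvage your route, perform this splitting \emph{before} averaging the BSDEs; as written, the proof is incomplete at its crux.
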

\begin{proof}
Fixing $\beta=(\tau,\zeta)$ on the right hand side of \eqref{V-recursive}, for every $\alpha\in U$ we obtain the inequality
\[
\widetilde{V}_{h,\varepsilon}(t,x) \le
\rho^g_{t+\varepsilon^2\tau,{t+h^2+\varepsilon^2\tau}}\bigg[
\int_{t+\varepsilon^2\tau}^{t+h^2+\varepsilon^2\tau}\hspace{-0.5em} c\big(s, X_{s}^{t+\varepsilon^2\tau, x+\varepsilon\zeta; \alpha}, \alpha\big)\xdif s
+ \widetilde{V}_{h,\varepsilon}\big(t+h^2,X_{t+h^2+\varepsilon^2\tau}^{t+ \varepsilon^2\tau,x +\varepsilon\zeta;\alpha}-\varepsilon\zeta\big) \bigg].\qquad
\]
 Since $t\le T-\varepsilon^2-h^2$, we can substitute $t-\varepsilon^2\tau$ for $t$ and $x-\varepsilon\zeta$ for $x$. We obtain
 \[
\widetilde{V}_{h,\varepsilon}\big(t-\varepsilon^2\tau, x-\varepsilon\zeta\big)
\le \rho^g_{t, t+h^2}
\left[ \int_t^{t+h^2} \hspace{-0.5em} c(s, X_s^{t, x;u}, \alpha)\xdif s + \widetilde{V}_{h,\varepsilon}\big(t+h^2 -\varepsilon^2\tau, X_{t+h^2}^{t, x;\alpha} -\varepsilon\zeta\big) \right].
\]
By virtue of Theorem \ref{t:geval-prop}, the risk measure $\rho_{t, t+h^2}[\cdot]$ is subadditive, and thus
 \begin{equation}
 \label{V-pre-moll}
 \begin{aligned}
\widetilde{V}_{h,\varepsilon}\big(t-\varepsilon^2\tau, x-\varepsilon\zeta\big)
&\le \rho^g_{t, t+h^2}
\left[ \int_t^{t+h^2} \hspace{-0.5em} c(s, X_s^{t, x;u}, \alpha)\xdif s + \widehat{V}_{h,\varepsilon}\big(t+h^2, X_{t+h^2}^{t, x;\alpha}\big) \right] \\
& {\quad}+ \rho^g_{t, t+h^2}
\left[\widetilde{V}_{h,\varepsilon}\big(t+h^2 -\varepsilon^2\tau, X_{t+h^2}^{t, x;\alpha} -\varepsilon\zeta\big) - \widehat{V}_{h,\varepsilon}\big(t+h^2, X_{t+h^2}^{t, x;\alpha}\big)\right].
\end{aligned}
\end{equation}
The last term on the right hand side of \eqref{V-pre-moll} can be equivalently bounded by using the dual representation of the risk measure $\rho_{t, t+h^2}[\cdot]$. We can thus write the following chain of relations:
\begin{align*}
\lefteqn{ \rho^g_{t, t+h^2}
\left[ \widetilde{V}_{h,\varepsilon}\big(t+h^2 -\varepsilon^2\tau, X_{t+h^2}^{t, x;\alpha} -\varepsilon\zeta\big) - \widehat{V}_{h,\varepsilon}\big(t+h^2, X_{t+h^2}^{t, x;\alpha}\big)\right]}\quad \\
& = \sup_{\varGamma\in \mathcal{A}_{t, t+h^2}}\hspace{-0.5em} \Eb_{t}\left[\varGamma\left(  \widetilde{V}_{h,\varepsilon}\big(t+h^2 -\varepsilon^2\tau, X_{t+h^2}^{t, x;\alpha} -\varepsilon\zeta\big) - \widehat{V}_{h,\varepsilon}\big(t+h^2, X_{t+h^2}^{t, x;\alpha}\big)\right)\right]\\
& =  \Eb_{t}\left[ \widetilde{V}_{h,\varepsilon}\big(t+h^2 -\varepsilon^2\tau, X_{t+h^2}^{t, x;\alpha} -\varepsilon\zeta\big) - \widehat{V}_{h,\varepsilon}\big(t+h^2, X_{t+h^2}^{t, x;\alpha}\big)\right]\\
&{\ }+ \sup_{\varGamma\in \mathcal{A}_{t, t+h^2}} \hspace{-0.5em}\Eb_{t}\left[(\varGamma-1)\left(  \widetilde{V}_{h,\varepsilon}\big(t+h^2 -\varepsilon^2\tau, X_{t+h^2}^{t, x;\alpha} -\varepsilon\zeta\big)\right) - \widehat{V}_{h,\varepsilon}\big(t+h^2, X_{t+h^2}^{t, x;\alpha}\big)\right]\\
& \le  \Eb_{t}\left[ \widetilde{V}_{h,\varepsilon}\big(t+h^2 -\varepsilon^2\tau, X_{t+h^2}^{t, x;\alpha} -\varepsilon\zeta\big) - \widehat{V}_{h,\varepsilon}\big(t+h^2, X_{t+h^2}^{t, x;\alpha}\big)\right]\\
&{\ }+ \sup_{\varGamma\in \mathcal{A}_{t, t+h^2}} \hspace{-0.5em}\big\| \varGamma-1 \big\| \left\|  \widetilde{V}_{h,\varepsilon}\big(t+h^2 -\varepsilon^2\tau, X_{t+h^2}^{t, x;\alpha} -\varepsilon\zeta\big) - \widehat{V}_{h,\varepsilon}\big(t+h^2, X_{t+h^2}^{t, x;\alpha}\big)\right\|.
\end{align*}
Owing to Corollary \ref{c:Gamma-bound} and Lemma \ref{estimatesall}(ii), we obtain the estimate
\begin{align*}
\lefteqn{ \rho^g_{t, t+h^2}
\left[ \widetilde{V}_{h,\varepsilon}\big(t+h^2 -\varepsilon^2\tau, X_{t+h^2}^{t, x;\alpha} -\varepsilon\zeta\big) - \widehat{V}_{h,\varepsilon}\big(t+h^2, X_{t+h^2}^{t, x;\alpha}\big)  \right]}\quad \\
& \le  \Eb_{t}\left[ \widetilde{V}_{h,\varepsilon}\big(t+h^2 -\varepsilon^2\tau, X_{t+h^2}^{t, x;\alpha} -\varepsilon\zeta\big) - \widehat{V}_{h,\varepsilon}\big(t+h^2, X_{t+h^2}^{t, x;\alpha}\big)\right]
+ Ne^{NT} h^2\varepsilon.
\end{align*}
Substitution for the last term in \eqref{V-pre-moll} yields
 \begin{multline}
 \label{V-pre-moll2}
\widetilde{V}_{h,\varepsilon}\big(t-\varepsilon^2\tau, x-\varepsilon\zeta\big)\le\rho^g_{t, t+h^2}
\left[ \int_t^{t+h^2} \hspace{-0.5em} c(s, X_s^{t, x;u}, \alpha)\xdif s + \widehat{V}_{h,\varepsilon}\big(t+h^2, X_{t+h^2}^{t, x;\alpha}\big) \right] \\
{}+ \Eb_{t}\left[ \widetilde{V}_{h,\varepsilon}\big(t+h^2 -\varepsilon^2\tau, X_{t+h^2}^{t, x;\alpha} -\varepsilon\zeta\big) - \widehat{V}_{h,\varepsilon}\big(t+h^2, X_{t+h^2}^{t, x;\alpha}\big)\right]
+ Ne^{NT} h^2\varepsilon.
\end{multline}
We  now multiply
both sides of \eqref{V-pre-moll2} by $\varphi(\tau, \zeta)$ and integrate over $B$. By changing the order of integration in the expected value term of \eqref{V-pre-moll2} we observe that
\begin{multline*}
\int_B \Eb_{t}\left[  \widetilde{V}_{h,\varepsilon}\big(t+h^2 -\varepsilon^2\tau, X_{t+h^2}^{t, x;\alpha} -\varepsilon\zeta\big) - \widehat{V}_{h,\varepsilon}\big(t+h^2, X_{t+h^2}^{t, x;\alpha}\big) \right]
\varphi(\tau, \zeta)\xdif \tau\xdif \zeta \\
= \Eb_{t} \left[\int_B \left(  \widetilde{V}_{h,\varepsilon}\big(t+h^2 -\varepsilon^2\tau, X_{t+h^2}^{t, x;\alpha} -\varepsilon\zeta\big) - \widehat{V}_{h,\varepsilon}\big(t+h^2, X_{t+h^2}^{t, x;\alpha}\big)\right)
\varphi(\tau, \zeta)\xdif \tau\xdif \zeta  \right]= 0.
\end{multline*}
Other terms on the right hand side of \eqref{V-pre-moll2} do not depend on $(\tau,\zeta)$ and thus
\eqref{V-moll} follows.
\end{proof}

\section{Accuracy of the Approximation}
\label{s:accuracy}
We can now investigate the effect of the size of the discretization interval, $h^2$, on the accuracy of the value function approximation. For simplicity of presentation, we write $\sigma^\alpha(s,x)$ for $\sigma(s,x,\alpha)$ and $c^\alpha(s,x)$ for $c(s,x,\alpha)$

\begin{lmm}\label{lem:mp2} For any $w\in \Cc_b^{1,2}([0, T]\times\Rb^n)$, any
$ 0 \le t \le \theta \le T$, and all $u(\cdot)\in \mathcal{U}$ we have:
\begin{equation}
\label{estimate-Ito}
\begin{aligned}
w(t, x) = \rho^g_{t,\theta} \bigg[ \int_t^{\theta} c(s, X_s^{t, x; u}, u_s)\xdif s
+ w(\theta, X_{\theta}^{t,x; u}) - \bar{\zeta}\bigg],
\end{aligned}
\end{equation}
where
\begin{equation}
\label{zeta}
\bar{\zeta}  = \int_t^{\theta} \Big\{  \big[c^{u_s}+ \Lb^{u_s}w\big](s, X_s^{t,x;u})
+ g\big(s, [\mathcal{D}_x w\cdot\sigma^{u_s}](s, X_s^{t,x;u})\big)
\Big\}\xdif s.
\end{equation}
\end{lmm}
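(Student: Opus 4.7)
The plan is to apply It\^{o}'s formula to $w(s,X_s^{t,x;u})$ on $[t,\theta]$ and to reinterpret the resulting identity as the defining BSDE for the $g$-evaluation on the right-hand side of \eqref{estimate-Ito}. Since $w\in\Cc_b^{1,2}$ and the operator $\Lb^{\alpha}$ of \S\Rref{s:DP-HJB} absorbs the $\partial_t$ term, It\^{o} yields
\begin{equation*}
w(\theta,X_\theta^{t,x;u}) = w(t,x) + \int_t^{\theta} [\Lb^{u_s}w](s,X_s^{t,x;u})\xdif s + \int_t^{\theta} [\Dc_x w\cdot \sigma^{u_s}](s,X_s^{t,x;u})\xdif W_s.
\end{equation*}
Denoting by $\xi$ the random variable inside the brackets on the right-hand side of \eqref{estimate-Ito} and substituting the explicit form \eqref{zeta} of $\bar{\zeta}$, the cost-rate terms cancel, leaving
\begin{equation*}
\xi = w(\theta,X_\theta^{t,x;u}) - \int_t^{\theta} [\Lb^{u_s}w](s,X_s^{t,x;u})\xdif s - \int_t^{\theta} g\big(s,[\Dc_x w\cdot \sigma^{u_s}](s,X_s^{t,x;u})\big)\xdif s.
\end{equation*}

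Next I would introduce the candidate pair
\begin{align*}
Y_s &:= w(s,X_s^{t,x;u}) - \int_t^{s} [\Lb^{u_r}w](r,X_r^{t,x;u})\xdif r - \int_t^{s} g\big(r,[\Dc_x w\cdot \sigma^{u_r}](r,X_r^{t,x;u})\big)\xdif r,\\
Z_s &:= [\Dc_x w\cdot \sigma^{u_s}](s,X_s^{t,x;u}),\qquad s\in[t,\theta],
\end{align*}
for which $Y_t = w(t,x)$ and $Y_\theta = \xi$ by direct inspection. Differentiating $Y_s$ and using the It\^{o} identity above, the $[\Lb^{u_s}w]\,\xdif s$ contributions cancel and the martingale increment becomes $Z_s\,\xdif W_s$, yielding
\begin{equation*}
-\xdif Y_s = g(s,Z_s)\xdif s - Z_s\xdif W_s, \qquad Y_\theta = \xi.
\end{equation*}
Thus $(Y,Z)$ solves the BSDE \eqref{eq:BSDE} with driver $g$ and terminal condition $\xi$. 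The requisite integrability $(Y,Z)\in \Sc^2[t,\theta]\times \Hc^{2,d}[t,\theta]$ follows from $w\in\Cc_b^{1,2}$ (so $w$, $\Dc_x w$, $\Dc^2_{xx} w$ are bounded), boundedness of $\sigma$ by Assumption \Rref{ass:sde}(i), Lipschitz continuity of $g$ by Assumption \Rref{a:pp1}, and square-integrability of $X^{t,x;u}$. By Definition \Rref{d:gev} and uniqueness of the BSDE solution, $Y_t = \rho^g_{t,\theta}[\xi]$, which gives $w(t,x) = \rho^g_{t,\theta}[\xi]$; this is exactly \eqref{estimate-Ito}.

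There is no serious obstacle here: the argument is essentially a bookkeeping identity combining It\^{o}'s formula with the BSDE characterization of $\rho^g$ in Definition \Rref{d:gev}. The only point requiring care is verifying that the candidate $(Y,Z)$ lies in the spaces needed to invoke BSDE uniqueness, and this is immediate from the $\Cc_b^{1,2}$ bounds on $w$ together with Assumption \Rref{ass:sde}.
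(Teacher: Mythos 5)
Your proof is correct and rests on the same core idea as the paper's: apply It\^{o}'s formula to $w(s,X_s^{t,x;u})$, recognize $[\mathcal{D}_x w\cdot\sigma^{u_s}]$ as the $Z$-component, and invoke uniqueness of the BSDE solution to identify the $g$-evaluation. The paper organizes this slightly differently (it applies $\rho^g_{t,\theta}$ to both sides of the It\^{o} identity, shows the stochastic-integral side has zero evaluation, and then uses the translation property), but your direct construction of the explicit solution pair $(Y,Z)$ with terminal value $\xi$ is the same computation in substance.
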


\begin{proof} For any $u(\cdot)\in \mathcal{U}$, we  apply It\^{o} formula
to $w(s, X_s^{t,x;u})$:
\begin{equation*}
w(\theta, X_{\theta}^{t,x;u}) -  w(t, x)  - \int_t^{\theta} \big[\Lb^{u_s}w\big](s, X_s^{t,x;u})\xdif s
= \int_t^{\theta} [\mathcal{D}_x w\cdot\sigma^{u_s}](s, X_s^{t,x;u})\xdif W_s.
\end{equation*}
Subtraction of $\int_t^{\theta} g\big(s, \big[\mathcal{D}_x w\cdot\sigma^{u_s}\big](s, X_s^{t,x;u})\big)\xdif s$ from both sides and evaluation of the risk on both sides yields
\begin{equation}
\begin{split}
\label{trick}
 \rho^g_{t,\theta} \bigg[ w(\theta, X_{\theta}^{s,x;u}) -  w(t,x)
- \int_t^{\theta} \Big( \big[\Lb^{u_s}w\big](s, X_s^{t,x;u}) + g\big(s, [\mathcal{D}_x w\cdot\sigma^{u_s}](s, X_s^{t,x;u})\big)\Big)\xdif s \bigg] \\
= \rho^g_{t,\theta} \bigg[ \int_t^{\theta} [\mathcal{D}_x w\cdot\sigma^{u_s}](s, X_s^{t,x,u})\xdif W_s -\int_t^{\theta} g\big(s, [\mathcal{D}_x w\cdot\sigma^{u_s}](s, X_s^{t,x;u})\big)\xdif s \bigg].
\end{split}
\end{equation}
The risk measure on the right hand side of \eqref{trick} is the solution of the following backward stochastic differential equation:
\begin{equation*}
\begin{split}
Y_t^{t,x;u} = \int_t^{\theta} [\mathcal{D}_x w\cdot\sigma^{u_s}](s, X_s^{t,x;u})\xdif W_s
 -\int_t^{\theta} g\big(s, [\mathcal{D}_x w\cdot\sigma^{u_s}](s, X_s^{t,x;u})\big)\xdif s \\
{} + \int_t^{\theta} g(s, Z_s^{t,x;u})\xdif s - \int_t^{\theta} Z_s^{t,x;u}\xdif W_s.
\end{split}
\end{equation*}
Substitution of $Z_s^{t,x;u} = [\mathcal{D}_x w\cdot\sigma^{u_s}](s, X_s^{t,x;u})$ yields $Y^{t,x;u}_t=0$. By the uniqueness of the solution of BSDE, the right hand side of \eqref{trick} is zero. Using the translation property on the left hand side of \eqref{trick}, we obtain
\begin{equation*}
w(t, x) = \rho^g_{t,\theta} \bigg[ - \int_t^{\theta} \Big(\big[\Lb^{u_s}w\big](s, X_s^{t,x;u}) + g\big(s, [\mathcal{D}_x w\cdot\sigma^{u_s}](s, X_s^{t,x;u})\big)\Big)\xdif s   + w\big(\theta, X_\theta^{t,x;u}\big)\bigg].
\end{equation*}
This is the same as \eqref{estimate-Ito}.
\end{proof}

The integral in \eqref{zeta} can be bounded by the following lemma.
\begin{lmm}
\label{l:estimate-alpha}
For all $t\in [0,T-h^2-\varepsilon^2]$, $x\in \Rb^n$, and all $\alpha\in U$, we have
\begin{equation}
\label{estimate-alpha}
[ c^\alpha
 + \Lb^{\alpha } \widehat{V}_{h,\varepsilon}](t,x)
+ g\big(t,[\mathcal{D}_x \widehat{V}_{h,\varepsilon} \cdot \sigma^\alpha](t,x)\big) \ge
-Ne^{NT}\left( \varepsilon + \frac{h}{\varepsilon^2}\right),
\end{equation}
where the constant $N$ does not depend on $h$, $\varepsilon$, and $T$.
\end{lmm}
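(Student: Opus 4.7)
The plan is to combine the approximate dynamic programming inequality from Lemma \ref{lem:mp1} with the exact It\^o identity of Lemma \ref{lem:mp2}, and then localize an averaged integrand to recover the pointwise value at $(t,x,\alpha)$. I would fix $\alpha\in U$ and apply both lemmas to the constant control $u\equiv\alpha$ on $[t,t+h^2]$. With
\[
\xi = \int_t^{t+h^2}\hspace{-0.5em} c(s,X_s^{t,x;\alpha},\alpha)\xdif s + \widehat V_{h,\varepsilon}(t+h^2, X_{t+h^2}^{t,x;\alpha}),\quad F(s,y) = [c^\alpha + \Lb^\alpha \widehat V_{h,\varepsilon}](s,y) + g\big(s,[\mathcal{D}_x\widehat V_{h,\varepsilon}\cdot\sigma^\alpha](s,y)\big),
\]
Lemma \ref{lem:mp1} gives $\widehat V_{h,\varepsilon}(t,x)\le \rho^g_{t,t+h^2}[\xi] + Ne^{NT}h^2\varepsilon$, while Lemma \ref{lem:mp2}---applicable since $\widehat V_{h,\varepsilon}\in\Cc^{1,2}_{\textup{b}}$ by Lemma \ref{estimatesall}---yields $\widehat V_{h,\varepsilon}(t,x) = \rho^g_{t,t+h^2}[\xi - \bar{\zeta}]$ with $\bar{\zeta} = \int_t^{t+h^2} F(s,X_s^{t,x;\alpha})\xdif s$. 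Subtracting and invoking subadditivity of the coherent risk measure $\rho^g_{t,t+h^2}$ (convexity plus positive homogeneity from Theorem \ref{t:geval-prop}) then produces $\rho^g_{t,t+h^2}[\bar{\zeta}] \ge -Ne^{NT}h^2\varepsilon$.

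Next I would pass from this risk-measure bound to a lower bound on the ordinary expectation $\Eb[\bar{\zeta}]$ (deterministic once $(t,x,\alpha)$ are fixed), and then localize. Using the dual representation of Theorem \ref{t:geval-dual},
\[
\rho^g_{t,t+h^2}[\bar{\zeta}] = \Eb[\bar{\zeta}] + \sup_{\Gamma\in\mathcal{A}_{t,t+h^2}} \Eb[(\Gamma-1)\bar{\zeta}],
\]
Cauchy--Schwarz together with the short-interval bound $\|\Gamma-1\|_{L^2}\lesssim h$ from Corollary \ref{c:Gamma-bound} and $\|\bar{\zeta}\|_{L^2}\le Ne^{NT}h^2/\varepsilon$ (which follows from $\|F\|_\infty\le Ne^{NT}\varepsilon^{-1}$, itself a consequence of Lemma \ref{estimatesall}) give $\rho^g_{t,t+h^2}[\bar{\zeta}] - \Eb[\bar{\zeta}] \le Ne^{NT}h^3/\varepsilon$, so $\Eb[\bar{\zeta}]\ge -Ne^{NT}(h^2\varepsilon + h^3/\varepsilon)$. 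For the localization step, Lemma \ref{estimatesall} makes $F$ jointly Lipschitz in $y$ and $1/2$-H\"older in $s$ with modulus $Ne^{NT}\varepsilon^{-2}$ (the Lipschitz constants of $\partial_t\widehat V_{h,\varepsilon}$ and $\mathcal{D}^2_{xx}\widehat V_{h,\varepsilon}$ dominate), and Assumption \ref{ass:sde}(i) yields $\Eb|X_s^{t,x;\alpha}-x|\le N|s-t|^{1/2}$; integrating over $[t,t+h^2]$ will give $|\Eb[\bar{\zeta}] - h^2 F(t,x,\alpha)|\le Ne^{NT}h^3/\varepsilon^2$. Combining, $h^2 F(t,x,\alpha) \ge -Ne^{NT}(h^2\varepsilon + h^3/\varepsilon + h^3/\varepsilon^2)$, and dividing by $h^2$ while absorbing $h/\varepsilon$ into $h/\varepsilon^2$ (valid since $\varepsilon\le 1$) delivers the announced estimate.

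The hard part will be the conversion from a lower bound on the risk measure to a lower bound on the expectation, because $\rho^g\ge\Eb$ for a coherent measure, so a lower bound on the supremum $\rho^g[\bar{\zeta}]=\sup_\Gamma\Eb[\Gamma\bar{\zeta}]$ does not directly imply one for $\Eb[\bar{\zeta}]$ itself. The resolution relies on the short-time closeness of the dual densities to $1$ provided by Corollary \ref{c:Gamma-bound}: over an interval of length $h^2$ the density $\Gamma$ deviates from $1$ by only $O(h)$ in $L^2$, which combined with the $O(h^2/\varepsilon)$ size of $\bar{\zeta}$ produces only an $O(h^3/\varepsilon)$ slack between $\rho^g[\bar{\zeta}]$ and $\Eb[\bar{\zeta}]$---small enough to be absorbed, after dividing by $h^2$, into the target $\varepsilon + h/\varepsilon^2$.
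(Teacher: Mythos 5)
Your proposal is correct and follows the same overall architecture as the paper's proof: Lemma \ref{lem:mp1} supplies the one-step dynamic programming inequality, an It\^{o}/BSDE identity converts the increment of $\widehat{V}_{h,\varepsilon}$ into the time integral of $F(s,X_s):=[c^\alpha+\Lb^\alpha\widehat{V}_{h,\varepsilon}](s,X_s)+g\big(s,[\mathcal{D}_x\widehat{V}_{h,\varepsilon}\cdot\sigma^\alpha](s,X_s)\big)$, and the regularity bounds of Lemma \ref{estimatesall} together with $\Eb|X_s^{t,x;\alpha}-x|\lesssim h$ localize that integral at $(t,x)$ at a cost of $Ne^{NT}h^3/\varepsilon^2$. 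The difference lies in the middle step, and it is to your credit. The paper does not invoke Lemma \ref{lem:mp2}; it substitutes the It\^{o} expansion into the inequality, writes the BSDE defining $\rho^g_{t,t+h^2}$ of the resulting expression, and asserts that its solution is $Z_s=[\mathcal{D}_x\widehat{V}_{h,\varepsilon}\cdot\sigma^\alpha](s,X_s)$ with $Y_t=\int_t^{t+h^2}F(s,X_s)\xdif s$, which it then compares to $h^2F(t,x)$ through conditional expectations of the increments. That candidate $Y_t$ is not $\mathcal{F}_t$-measurable (the analogous substitution inside Lemma \ref{lem:mp2} is adapted only because the $g$-integral is subtracted off in the terminal condition there), so the identification of $Y_t$ with the expectation of $\bar\zeta$ is left implicit. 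You instead reuse Lemma \ref{lem:mp2} as a black box to get $\widehat{V}_{h,\varepsilon}(t,x)=\rho^g_{t,t+h^2}[\xi-\bar\zeta]$, extract $\rho^g_{t,t+h^2}[\bar\zeta]\ge-Ne^{NT}h^2\varepsilon$ by subadditivity, and then pay explicitly for replacing $\rho^g$ by $\Eb$ via the dual representation and Corollary \ref{c:Gamma-bound}, incurring an extra $\|\varGamma-1\|\,\|\bar\zeta\|\lesssim h\cdot h^2/\varepsilon$ that is absorbed into $h/\varepsilon^2$ after dividing by $h^2$. You also correctly flag this $\rho^g$-versus-$\Eb$ conversion as the delicate point: since $\rho^g\ge\Eb$ for a coherent measure, a lower bound on $\rho^g[\bar\zeta]$ gives nothing for $\Eb[\bar\zeta]$ without the closeness of the dual densities to $1$ on short intervals. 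Your route is therefore the more rigorous rendering of the same argument, with identical error terms.
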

\begin{proof}
By Lemma \ref{lem:mp1}, for every $\alpha\in U$ we have
\[
\widehat{V}_{h,\varepsilon}\big(t, x\big) \le \rho^g_{t, t+h^2}
\left[ \int_t^{t+h^2} \hspace{-0.5em}c(s, X_s^{t, x;u}, \alpha)\xdif s + \widehat{V}_{h,\varepsilon}\big(t+h^2 , X_{t+h^2}^{t, x;\alpha}\big) \right]+Ne^{NT} h^2\varepsilon.
\]
Using the translation property of $\rho_{t,t+h^2}$, we obtain the inequality:
\[
\rho^g_{{t},{t}+h^2}\left( \int_{t}^{t+h^2} c\big(s,X^{t, x;\alpha}_s,\alpha\big)\xdif s
+ \widehat{V}_{h,\varepsilon}\big(t+h^2,  X^{t, x;\alpha}_{t+h^2}\big)- \widehat{V}_{h,\varepsilon}(t, x)\right)\ge -Ne^{NT} h^2\varepsilon.
\]
Since $\widehat{V}_{h,\varepsilon}\in \Cc^{1,2}_{\textup{b}}([t,T-\varepsilon^2]\times\Rb^n)$, we can evaluate the  difference
$\widehat{V}_{h,\varepsilon}\big(t+h^2,  X^{t, x;\alpha}_{t+h^2}\big)-\widehat{V}_{h,\varepsilon}(t,x)$
by It\^{o} formula between $t$ and $t+h^2$:
\[
\widehat{V}_{h,\varepsilon}\big(t+h^2,  X^{t, x;\alpha}_{t+h^2}\big)
- \widehat{V}_{h,\varepsilon}(t, x) =  \int_{t}^{t+h^2}[\Lb^{\alpha} \widehat{V}_{h,\varepsilon}](s, X^{t, x;\alpha}_s) \xdif s
+ \int_{t}^{t+h^2} [\mathcal{D}_x \widehat{V}_{h,\varepsilon} \cdot \sigma^\alpha](s, X^{t, x;\alpha}_s) \xdif W_s.
\]
Substitution into the previous inequality yields:
\begin{equation}
\label{rho-in-HJBa}
\rho^g_{t,t+h^2}\Bigg( \int_{t}^{t+h^2} [ c^\alpha
 + \Lb^{\alpha } \widehat{V}_{h,\varepsilon}](s, X^{t, x;\alpha}_s) \xdif s
{} + \int_{t}^{t+h^2} [\mathcal{D}_x \widehat{V}_{h,\varepsilon} \cdot \sigma^\alpha](s, X^{t, x;\alpha}_s) \xdif W_s\Bigg)
\ge -Ne^{NT} h^2\varepsilon.
\end{equation}
The evaluation of the risk measure amounts to solving the following backward stochastic differential equation:
\begin{multline*}
\qquad Y_{t} = \int_{t}^{t+h^2} [ c^\alpha
 + \Lb^{\alpha } \widehat{V}_{h,\varepsilon}](s, X^{t, x;\alpha}_s) \xdif s
+ \int_{t}^{t+h^2} [\mathcal{D}_x \widehat{V}_{h,\varepsilon} \cdot \sigma^\alpha](s, X^{t, x;\alpha}_s) \xdif W_s \\
+ \int_{t}^{t+h^2} g(s,Z_s)\xdif s - \int_{t}^{t+h^2} Z_s\xdif W_s.\qquad
\end{multline*}
The equation has a unique solution:
\begin{gather*}
Z_s = [\mathcal{D}_x \widehat{V}_{h,\varepsilon} \cdot \sigma^\alpha](s, X^{t, x;\alpha}_s),\quad t \le s \le t+h^2, \\
Y_{t} = \int_{t}^{t+h^2} \Big\{[ c^\alpha
 + \Lb^{\alpha } \widehat{V}_{h,\varepsilon}](s, X^{t, x;\alpha}_s)
+ g\big(s,[\mathcal{D}_x \widehat{V}_{h,\varepsilon} \cdot \sigma^\alpha](s, X^{t, x;\alpha}_s)\big)\Big\} \xdif s.
\end{gather*}
We can thus write the inequality
\begin{align*}
Y_t \le {} & h^2 \left( [ c^\alpha
 + \Lb^{\alpha } \widehat{V}_{h,\varepsilon}](t,x)
+ g\big(t,[\mathcal{D}_x \widehat{V}_{h,\varepsilon} \cdot \sigma^\alpha](t,x)\big)\right)\\
&{} + h^2 \max_{t \le s \le t+h^2}\Eb_{t} \bigg\{ \Big|
[ c^\alpha
 + \Lb^{\alpha } \widehat{V}_{h,\varepsilon}](s, X^{t, x;\alpha}_s) -
[c^\alpha + \Lb^{\alpha } \widehat{V}_{h,\varepsilon}](t,x)\Big| \bigg\}\\
&{} + h^2 \max_{t \le s \le t+h^2}\Eb_{t} \bigg\{ \Big|
g\big(s,[\mathcal{D}_x \widehat{V}_{h,\varepsilon} \cdot \sigma^\alpha](s, X^{t, x;\alpha}_s)\big) -
g\big(t,[\mathcal{D}_x \widehat{V}_{h,\varepsilon} \cdot \sigma^\alpha](t,x)\big)\Big| \bigg\}.
\end{align*}
The last two terms can be bounded by $Ne^{NT}h^3 / \varepsilon^{2}$, owing to Assumption \ref{ass:sde} and Lemma \ref{estimatesall}. Combining this inequality with \eqref{rho-in-HJBa} and dividing by $h^2$, we conclude that for
all $\alpha\in U$ the estimate \eqref{estimate-alpha} is true.
\end{proof}

We are now ready to prove the main theorem of this section.

\begin{thrm}
\label{t:error-estimate}
Suppose Assumptions \Rref{a:riskmeasure} and \Rref{ass:sde} are satisfied. Then
for any $t\in[0, T]$, $x\in\Rb^n$, and $h\in(0, 1]$, we have
\begin{align*}
|V(t,x) - V_h(t, x)|\leq Ne^{NT}h^{\frac{1}{3}},
\end{align*}
where the constant $N$ depends only on $(K, n, d)$.
\end{thrm}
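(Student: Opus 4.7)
The reverse inequality $V(t,x)\le V_h(t,x)$ is immediate since $\mathcal{U}_h^t\subseteq \mathcal{U}[t,T]$, so the task reduces to bounding $V_h(t,x)-V(t,x)$ from above (the boundary case $t\in (T-h^2-\varepsilon^2,T]$ being handled by the $|T-t|^{1/2}$-Lipschitz continuity of both $V$ and $V_h$ in $t$). My plan is to sandwich both value functions around the mollified function $\widehat{V}_{h,\varepsilon}$ and then balance the two error scales $\varepsilon$ and $h/\varepsilon^2$ appearing in the preceding lemmas. Combining Lemma \ref{cor:be}(i) with Lemma \ref{estimatesall} immediately gives $|V_h(t,x)-\widehat{V}_{h,\varepsilon}(t,x)|\le Ne^{NT}\varepsilon$, so it suffices to prove the one-sided bound $\widehat{V}_{h,\varepsilon}(t,x)\le V(t,x)+Ne^{NT}(\varepsilon+h/\varepsilon^2)$.

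To obtain it I would apply Lemma \ref{lem:mp2} with $w=\widehat{V}_{h,\varepsilon}$ (which lies in $\Cc^{1,2}_{\textup{b}}$ by Lemma \ref{estimatesall}) and terminal time $\theta=T-h^2-\varepsilon^2$: for any admissible $u$,
\begin{equation*}
\widehat{V}_{h,\varepsilon}(t,x) = \rho^g_{t,\theta}\Big[\int_t^{\theta} c(s,X^{t,x;u}_s,u_s)\,\xdif s + \widehat{V}_{h,\varepsilon}(\theta,X^{t,x;u}_\theta) - \bar\zeta\Big].
\end{equation*}
By Lemma \ref{l:estimate-alpha} the integrand defining $\bar\zeta$ in \eqref{zeta} is pointwise bounded below by $-Ne^{NT}(\varepsilon+h/\varepsilon^2)$, so $-\bar\zeta\le Ne^{NT}(\varepsilon+h/\varepsilon^2)T$ almost surely. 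Monotonicity and the translation property of $\rho^g$ then yield
\begin{equation*}
\widehat{V}_{h,\varepsilon}(t,x)\le \rho^g_{t,\theta}\Big[\int_t^{\theta} c(s,X^{t,x;u}_s,u_s)\,\xdif s + \widehat{V}_{h,\varepsilon}(\theta,X^{t,x;u}_\theta)\Big] + Ne^{NT}(\varepsilon+h/\varepsilon^2)T.
\end{equation*}

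To close the residual layer $[\theta,T]$, I would fix any $\alpha_0\in U$ and let $\bar u$ be the admissible control that agrees with $u$ on $[t,\theta]$ and equals $\alpha_0$ on $[\theta,T]$. Since $\bar u|_{[\theta,T]}$ is piecewise-constant, the sandwich bound applied at $\theta$ yields $\widehat{V}_{h,\varepsilon}(\theta,y)\le V_h(\theta,y)+Ne^{NT}\varepsilon\le V^{\bar u}(\theta,y)+Ne^{NT}\varepsilon$ pointwise in $y$. Substituting into the display above and invoking time-consistency of $\rho^g$ (Proposition \ref{p:nonev}(ii)) to splice the two evaluations into a single $\rho^g_{t,T}$ gives $\widehat{V}_{h,\varepsilon}(t,x)\le V^{\bar u}(t,x)+Ne^{NT}(\varepsilon+h/\varepsilon^2)$. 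Standard Lipschitz/BSDE estimates then bound $V^{\bar u}(t,x)\le V^{u}(t,x)+O(\sqrt{T-\theta})=V^u(t,x)+O(\varepsilon)$, since $\bar u$ and $u$ differ only on an interval of length $h^2+\varepsilon^2$. Taking the infimum over $u\in\mathcal{U}$ produces $\widehat{V}_{h,\varepsilon}(t,x)\le V(t,x)+Ne^{NT}(\varepsilon+h/\varepsilon^2)$, so combined with the first sandwich,
\begin{equation*}
V_h(t,x)-V(t,x)\le Ne^{NT}(\varepsilon+h/\varepsilon^2).
\end{equation*}
The choice $\varepsilon=h^{1/3}$, which equalizes the two terms, then yields the asserted bound $Ne^{NT}h^{1/3}$.

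The main obstacle is the terminal boundary layer: Lemmas \ref{estimatesall} and \ref{l:estimate-alpha} are valid only on $[0,T-h^2-\varepsilon^2]\times\Rb^n$, so the $\rho^g$-inequality cannot be propagated all the way to $T$ where $\widehat{V}_{h,\varepsilon}$ would coincide with $\varPsi$. Bridging this layer by concatenation with a constant control, without inflating the error beyond $O(\varepsilon)$, crucially relies on subadditivity, translation, and time-consistency of $\rho^g$ established in Theorem \ref{t:geval-prop} and Proposition \ref{p:nonev}. A secondary point is that the $h/\varepsilon^2$ term in Lemma \ref{l:estimate-alpha}, inherited from the $h^2\varepsilon$ slack in Lemma \ref{lem:mp1} divided by $h^2$, is precisely what forces the optimal scale $\varepsilon\sim h^{1/3}$ and therefore the exponent $1/3$ in the final rate.
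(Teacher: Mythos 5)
Your proposal is correct and follows the paper's argument almost exactly: sandwich $V_h$ against the mollified function $\widehat{V}_{h,\varepsilon}$ via Lemmas \ref{cor:be} and \ref{estimatesall}, apply Lemma \ref{lem:mp2} with $w=\widehat{V}_{h,\varepsilon}$ up to $\theta=T-h^2-\varepsilon^2$, bound $\bar\zeta$ from below by Lemma \ref{l:estimate-alpha}, treat the terminal layer $[\theta,T]$ separately, and balance the errors with $\varepsilon=h^{1/3}$. The one place you genuinely diverge is the closing step. The paper replaces $\widehat{V}_{h,\varepsilon}(\theta,\cdot)$ by $V(\theta,\cdot)$ inside the risk measure (using its Step-1 boundary estimate), takes the infimum over $u$, and invokes the dynamic programming equation of Theorem \ref{t:DP-equation} to identify $\inf_{u}\rho^g_{t,\theta}\big[\int_t^\theta c\,\xdif s+V(\theta,X^{t,x;u}_\theta)\big]$ with $V(t,x)$. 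You instead concatenate $u$ with a constant control on $[\theta,T]$, splice the two evaluations into a single $\rho^g_{t,T}$ by translation and time-consistency, and then compare $V^{\bar u}$ with $V^{u}$ directly, paying an extra $O(\sqrt{T-\theta})=O(\varepsilon)$ that is absorbed into the dominant $h^{1/3}$ term. This buys you independence from the full dynamic programming principle of Theorem \ref{t:DP-equation} --- you only need the elementary ``concatenation gives an upper bound on the infimum'' direction together with the flow property of the decoupled FBSDE --- at the price of a slightly longer bookkeeping argument on the terminal layer. Both routes are sound and yield the same rate.
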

\begin{proof} We set $\varepsilon = h^{\frac{1}{3}}$ and organize the proof in three steps.

\emph{Step 1: } If $t\in [T-h^2-\varepsilon^2, T]$, then
 for any $u(\cdot)$ and some constant $C$ we have,
\begin{align*}
\big|V^{u}(t, x)-\varPhi(x)\big| &\leq
\rho_{t, T}\bigg(\int_t^T \big| c(s, X^{t, x, u}_s, u_s)\big|\xdif s + \big| \varPhi(X_{T}^{t, x, u}) - \varPhi(x)\big| \bigg)\\
&\leq K ( h^2+\varepsilon^2) + K \mathbb{E}_{t, x}\left[|X_{T}^{t, x, u} - x|\right]
\leq K(1+C)( h^2+\varepsilon^2) \le 2 K(1+C)h^{\frac{2}{3}}.
\end{align*}
In the above estimate we also used the fact that the solution of the forward--backward
system \eqref{s:cdp}--\eqref{s-fbsde} is Lipschitz in the initial condition \cite{MA}.
The same reasoning works for $V^{u}_h$, and thus
\begin{align*}
 |V^{u}_h(s, x) - \varPhi(x)|\leq 2 K(1+C)h^{\frac{2}{3}}.
\end{align*}
We can, therefore, for some constant $N$ write the inequality
\begin{align*}
|V^u(t, x) - V^u_h(t, x)|\leq Ne^{NT}h^{\frac{1}{3}}.
\end{align*}
The optimization over $u$ will not make it worse, and thus our assertion is true for these $t$.

\emph{Step 2: } Consider $t\in [0, T-h^2-\varepsilon^2]$.
By Lemma \ref{lem:mp2}, for all $u(\cdot)\in \mathcal{U}$ on $[t, T-h^2-\varepsilon^2]$, we have
\[
\widehat{V}_{h,\varepsilon}\big(t, x\big) \le
\rho^g_{t,T-h^2-\varepsilon^2}\left(\int_t^{T-h^2-\varepsilon^2}c(s, X_s^{t, x, u}, u_s)\xdif s + \widehat{V}_{h,\varepsilon}\big(T-h^2-\varepsilon^2, X_{T-h^2-\varepsilon^2}^{t, x, u}) - \bar{\zeta}\right),
\]
where, owing to Lemma \ref{l:estimate-alpha},
\[
\bar{\zeta} = \int_t^{T-h^2-\varepsilon^2} \left(\big[ c^{u_s} + \Lb^{u_s} \widehat{V}_{h,\varepsilon}\big](s, x) + g(s, [\partial_x \widehat{V}_{h,\varepsilon}\cdot\sigma^{u_s}](s, x))\right)\xdif s
 \ge -NTe^{NT}\left( \varepsilon + \frac{h}{\varepsilon^2}\right).
\]
These relations,  the monotonicity of the risk measure, and Lemmas  \ref{cor:be} and \ref{estimatesall} imply the estimate
\begin{multline*}
{V}_{h}(t, x) \le
\rho^g_{t,T-h^2-\varepsilon^2}\bigg(\int_t^{T-h^2-\varepsilon^2}c(s, X_s^{t, x, u}, u_s)\xdif s\\ {}  + {V}_{h}\big(T-h^2-\varepsilon^2, X_{T-h^2-\varepsilon^2}^{t, x, u}) \bigg)
{} +
NTe^{NT}\left( \varepsilon + \frac{h}{\varepsilon^2}\right)+4 Ne^{NT}\varepsilon.
\end{multline*}
In view of the inequality established in Step 1, using $\varepsilon = h^{\frac{1}{3}}$, and redefining $N$
appropriately, we obtain the following inequality:
\begin{equation}
\label{Vh-upper}
{V}_{h}(t, x) \le
\rho^g_{t,T-h^2-\varepsilon^2}\bigg(\int_t^{T-h^2-\varepsilon^2}c(s, X_s^{t, x, u}, u_s)\xdif s   + {V}\big(T-h^2-\varepsilon^2, X_{T-h^2-\varepsilon^2}^{t, x, u}) \bigg)
 +
Ne^{NT}h^{\frac{1}{3}}.
\end{equation}

\emph{Step 3: } We apply the dynamic programming equation \eqref{DPE} to the right hand side of
\eqref{Vh-upper} to conclude that
\begin{multline*}
{V}_{h}(t, x) \le
\inf_{u(\cdot)\in  \mathcal{U}}\rho^g_{t,T-h^2-\varepsilon^2}\bigg(\int_t^{T-h^2-\varepsilon^2}c(s, X_s^{t, x, u}, u_s)\xdif s \\{}  + {V}\big(T-h^2-\varepsilon^2, X_{T-h^2-\varepsilon^2}^{t, x, u}) \bigg)
{} +
Ne^{NT}h^{\frac{1}{3}}
 = {V}(t, x) +
Ne^{NT}h^{\frac{1}{3}},
\end{multline*}
as required.
\end{proof}

\end{document}